\newcommand{\prs}{\langle\;,\;\rangle}
\newcommand{\br}{[\;,\;]}
\newcommand{\too}{\longrightarrow}
\newcommand{\esp}{\quad\mbox{and}\quad}
\newcommand{\G}{{\mathfrak{g}}}
\newcommand{\ad}{{\mathrm{ad}}}
\newcommand{\tr}{{\mathrm{tr}}}
\newcommand{\ric}{{\mathrm{ric}}}
\newcommand{\Ri}{{\mathrm{Ric}}}
\newcommand{\B}{{\cal B}}
\newcommand{\al}{\alpha}
\newcommand{\be}{\beta}
\newcommand{\ga}{\gamma}
\newcommand{\e}{\epsilon}
\newcommand{\la}{\lambda}
\font\bb=msbm10
\def\B{\hbox{\bb B}}
\def\R{\hbox{\bb R}}
\newcommand{\n}{\mathfrak{n}}
\newtheorem{theo}{Theorem}[section]
\newtheorem{pr}{Proposition}[section]
\newtheorem{Le}{Lemma}[section]
\newtheorem{co}{Corollary}[section]
\newtheorem{exem}{Example}
\newtheorem{rem}{Remark}
\begin{document}

\begin{frontmatter}


 

\title{  On Einstein Lorentzian nilpotent Lie groups}

\author[label1,label2]{ Mohamed Boucetta, Oumaima Tibssirte}
 \address[label1]{Universit\'e Cadi-Ayyad\\
 Facult\'e des sciences et techniques\\
 BP 549 Marrakech Maroc\\e-mail: m.boucetta@uca.ma
 }
 
 \address[label2]{Universit\'e Cadi-Ayyad\\
 	Facult\'e des sciences et techniques\\
 	BP 549 Marrakech Maroc\\e-mail: oumayma1tibssirte@gmail.com 
 }



\begin{abstract}In this paper, we study Lorentzian left invariant Einstein metrics on nilpotent Lie groups. We show that if the center of such Lie groups is degenerate then they are Ricci-flat and their Lie algebras can be obtained by the double   extension process from an abelian Euclidean Lie algebra. We show that all nilpotent Lie groups up to dimension $5$ endowed with a Lorentzian Einstein left invariant metric  have degenerate center and we use this fact to give a complete classification of these metrics. We show that if $\G$ is the Lie algebra of a nilpotent Lie group endowed with a Lorentzian left invariant Einstein metric with non zero scalar curvature then the center $Z(\G)$ of $\G$ is nondegenerate Euclidean, the derived ideal $[\G,\G]$ is nondegenerate Lorentzian and $Z(\G)\subset[\G,\G]$. We give the first examples of Ricci-flat Lorentzian nilpotent Lie algebra with nondegenerate center.
\end{abstract}

\begin{keyword} Einstein Lorentzian manifolds \sep  Nilpotent Lie groups  \sep Nilpotent Lie algebras \sep 
\MSC 53C50 \sep \MSC 53C25 \sep \MSC 22E25


\end{keyword}

\end{frontmatter}






\section{Introduction} \label{section1}
A pseudo-Riemannian manifold $(M,g)$ is called Einstein if its Ricci operator $\Ri:TM\too TM$ satisfies $\Ri=\la\mathrm{Id}_{TM}$ for some constant $\la\in\R$. When $\la=0$, $(M,g)$ is called Ricci-flat. Pseudo-Riemannian Einstein manifolds present a central topic of differential geometry and an active area of research. The subclass of Lorentzian Einstein manifolds has attracted a particular interest due to its importance in the physics of general relativity (see \cite{besse}). Homogeneous Riemannian manifolds were intensively studied and the  Alekseevskii's conjecture (see \cite{besse}) has driven a profound exploration  of Einstein left invariant Riemannian metrics on Lie groups  leading to some outstanding results (see \cite{heber, lauret1}). However, the study of left invariant Einstein pseudo-Riemannian metrics on Lie groups is at beginning. 
 In \cite{Aub-Med, bou1}, flat Lorentzian left invariant metrics on Lie groups has been studied, in \cite{bouh} flat left invariant metrics of signature $(2,n-2)$ on nilpotent Lie groups has been characterized, Ricci-flat Lorentzian left invariant metrics on 2-step nilpotent Lie groups has been investigated in \cite{bouc1, guediri} and in \cite{calvaruso3, derd}, all  four dimensional Lie algebras of Einstein Lorentzian Lie groups were given. The study of pseudo-Riemannian Einstein left invariant metric with non vanishing scalar curvature has been initiated in \cite{Dconti1}.

In this paper, we study Einstein Lorentzian left invariant metrics on nilpotent Lie groups. As in any study involving left invariant structures on Lie groups, we can consider the problem at the Lie algebra level. Let $(\G,\br,\prs)$ be a nilpotent Lorentzian Lie algebra with Ricci operator $\mathrm{Ric}:\G\longrightarrow\G$ satisfying $\mathrm{Ric}=\lambda\mathrm{Id}_\G$. Our main results can be stated as follows :
\begin{enumerate}
	\item If the center $\mathrm{Z}(\G)$ of $\G$ is nondegenerate then it is Euclidean and if the derived ideal $[\G,\G]$ is nondegenerate then it is Lorentzian. 
	\item If  $[\G,\G]$ is degenerate then $Z(\G)$ is degenerate and the metric is Ricci-flat.
	\item If the scalar curvature of $\G$ is non zero then $Z(\G)$ is nondegenerate Euclidean, $[\G,\G]$ is nondegenerate  Lorentzian and $Z(\G)\subset[\G,\G]$.
	\item If $Z(\G)$ is degenerate then $\G$ is Ricci-flat and $(\G,\br,\prs)$ is obtained by the process of double extension from an abelian Euclidean Lie algebra. The process of double extension has been introduced by Medina-Revoy \cite{medina} in the context of bi-invariant pseudo-Riemannian metrics on Lie groups and turned out to be efficient in many other situations. We adapt this process to our case and, in addition to our main result, we use it to construct a large class of Einstein Lorentzian Lie algebras (not necessarily nilpotent). We also recover the description of $2$-step nilpotent Lorentzian Lie algebras obtained in \cite{guediri}. 
	\item If $\G$ is Ricci-flat non-abelian, and $\dim[\G,\G]=\dim (Z(\G)\cap[\G,\G])+1$ then $Z(\G)$ is degenerate.
	\item If $\dim\G\leq 5$ then the center of $\G$ is degenerate. In this case we give a complete classification of all such Lie algebras.
	\item We give the first examples Ricci-flat Lorentzian nilpotent Lie algebras with nondegenerate center. It is worth to mention that this differs from the flat case. Indeed, it has been shown (see \cite{bou1, bou}) that if a nilpotent Lie group $G$ is endowed with a flat left-invariant metric which is either Lorentzian or of signature $(2,n-2)$ then its center must be degenerate.
	\item We give another proof of the main result in \cite{Dconti1} by using a formula known in the Euclidean context (see Propositions \ref{pr4}-\ref{pr5})
	
\end{enumerate}

	 The paper is organized as follows. In Section \ref{section2}, we establish two lemmas and we give a useful expression of the Ricci operator involving our main tool a family of skew-symmetric endomorphisms we call structure endomorphisms.  In Section \ref{section3}, we prove some general results on Einstein Lorentzian nilpotent Lie algebras. In Section \ref{section4}, we describe the process of double extension which permits to construct a large class of Einstein Lorentzian Lie algebras and we prove our main result (see Theorem \ref{main}), then we show that Lorentzian Einstein nilpotent Lie algebras up to dimension $5$ satisfy the hypothesis of this theorem and we give the list of such algebras. Finally, we give the first examples of Ricci-flat Lorentzian nilpotent Lie algebras with nondegenerate center. This opens widely the door for a future study of this particular class.
\section{Ricci curvature of pseudo-Euclidean Lie algebras}\label{section2}

A \emph{pseudo-Euclidean  vector space } is  a real vector space  of finite dimension $n$
endowed with  a
nondegenerate symmetric inner product  of signature $(q,n-q)=(-\ldots-,+\ldots+)$.  When
the
signature is $(0,n)$
(resp. $(1,n-1)$) the space is called \emph{Euclidean} (resp. \emph{Lorentzian}).

Let $(V,\prs)$ be a pseudo-Euclidean vector space of signature  $(q,n-q)$. A vector $u\in V$  is called \emph{isotropic} if  $\langle u,u\rangle=0$.
 A family
$(u_1,\ldots,u_s)$ of vectors in $V$ is called \emph{orthogonal}  if, for $i,j=1,\ldots,s$
and $i\not=j$, $\langle u_i,u_j\rangle=0$. An orthonormal basis of $V$ is an orthogonal basis $(e_1,\ldots,e_n)$ such that $\e_i=\langle e_i,e_i\rangle=\pm1$. A \emph{pseudo-Euclidean basis} of $V$ is a basis $(e_1,\bar e_2,\ldots,e_q,\bar
e_q,f_1,\ldots,f_{n-2q})$ for which the non vanishing products are 
$$\langle \bar e_i, e_i\rangle=\langle f_j,f_j\rangle=1,\;i\in\{1,\ldots,q\}\;\;\;\text{and}\;\;\;j\in\{1,\ldots,n-2q \}.$$

A vector subspace $F$ of $V$ is called
 nondegenerate if $F\cap F^\perp=\{0\}$,
	 degenerate if $F\cap F^\perp\not=\{0\}$
	and totally isotropic if $\langle u,v\rangle=0$ for any $u,v\in F$.
Note that we have always $\dim F+\dim F^\perp=\dim V$  and
 if $F$ is totally isotropic then $\dim F\leq\min(q,n-q)$.
In particular, for any vector subspace $F$, $\dim (F \cap F^\perp)\leq \min(q,n-q)$. The vector subspace $F^\perp$ is the orthogonal of $F$ with respect to $\prs$.

Let $(V,\prs)$ be a Lorentzian vector space and $F\subset V$ is a vector subspace. Then either:

\begin{enumerate}
	\item $F$ is nondegenerate Euclidean and $F^\perp$ is nondegenerate Lorentzian,
	\item $F$ is nondegenerate Lorentzian and $F^\perp$ is nondegenerate Euclidean,
	\item $F$ is degenerate and $\dim (F\cap F^\perp)=1$.
\end{enumerate}

 For any endomorphism $A:V\too V$, we denote by $A^*:V\too V$ its adjoint with respect to $\prs$. The two following lemmas will be very useful later.
\begin{Le}\label{le1} Let $(V,\prs)$ be a Lorentzian vector space,  $e$ an isotropic vector and $A$ a skew-symmetric endomorphism. Then $\langle Ae,Ae\rangle\geq0$. Moreover,  $\langle Ae,Ae\rangle=0$ if and only if $Ae=\al e$ with $\al\in\R$.\end{Le}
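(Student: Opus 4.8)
The plan is to reduce everything to the standard fact that, in Lorentzian signature, the inner product restricted to the orthogonal complement of a nonzero isotropic vector is positive semi-definite with one-dimensional radical. We may assume $e\neq 0$, the statement being trivial otherwise. The first step is to use skew-symmetry to locate $Ae$: since $A^*=-A$, we have $\langle Ae,e\rangle=-\langle e,Ae\rangle=-\langle Ae,e\rangle$, hence $\langle Ae,e\rangle=0$ and therefore $Ae\in e^\perp$. Thus the whole question reduces to the behaviour of $\prs$ on $e^\perp$.

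Next I would describe $e^\perp$ explicitly. Because $e$ is isotropic and nonzero, $e\in e^\perp$, so $e^\perp$ is a degenerate hyperplane; by the trichotomy recalled just before the lemma, $\dim(e^\perp\cap(e^\perp)^\perp)=1$, and since $(e^\perp)^\perp=\R e$ this radical is exactly $\R e$. Completing $e$ to a pseudo-Euclidean basis $(e,\bar e,f_1,\ldots,f_{n-2})$ with $\langle e,\bar e\rangle=1$, $\langle f_i,f_j\rangle=\delta_{ij}$ and all other products zero, one has $e^\perp=\R e\oplus\mathrm{span}(f_1,\ldots,f_{n-2})$. For a vector $v=\alpha e+\sum_i b_i f_i\in e^\perp$ this gives $\langle v,v\rangle=\sum_i b_i^2\geq 0$, which applied to $v=Ae$ already yields $\langle Ae,Ae\rangle\geq 0$.

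The same computation settles the equality case: $\langle v,v\rangle=0$ forces all $b_i=0$, that is $v\in\R e$; applied to $v=Ae$ this shows $\langle Ae,Ae\rangle=0$ implies $Ae=\alpha e$ for some $\alpha\in\R$, while the converse is immediate since then $\langle Ae,Ae\rangle=\alpha^2\langle e,e\rangle=0$. The only genuine content is the positive semi-definiteness of $\prs$ on $e^\perp$, which is exactly where the Lorentzian hypothesis (a single negative direction) is essential; I expect this to be the main point to pin down, but it follows directly from the adapted-basis description above, so I anticipate no serious obstacle beyond writing the basis correctly.
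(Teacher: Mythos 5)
Your proof is correct and follows essentially the same route as the paper's: both observe that skew-symmetry forces $\langle Ae,e\rangle=0$ (so $Ae$ has no $\bar e$-component in a Witt basis $(e,\bar e,f_1,\ldots)$) and then read off $\langle Ae,Ae\rangle=\sum_i a_i^2\geq 0$, with equality exactly when $Ae\in\R e$. Your version merely makes explicit the intermediate fact that $\prs$ restricted to $e^\perp$ is positive semi-definite with radical $\R e$, which the paper leaves implicit in its basis computation.
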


\begin{proof} We choose an isotropic vector $\bar{e}$ such that $\langle e,\bar{e}\rangle=1$ and an orthonormal basis $(f_1,\ldots,f_r)$ of $\{e,\bar{e}\}^\perp$. Since $A$ is skew-symmetric,  we have 
	\[ Ae=\al e+\sum_{i=1}^ra_if_i\esp \langle Ae,Ae\rangle=\sum_{i=1}^ra_i^2, \]
	and the result follows.	
\end{proof}

\begin{Le}\label{le} Let $(V,\prs)$ be a Lorentzian vector space, $e$ an isotropic vector and $A$ a skew-symmetric endomorphism such that $A(e)=0$. Then: 
	\begin{enumerate}
		\item $\tr(A^2)\leq0$,
		\item $\tr(A^2)=0$ if and only if for any $x\in e^\perp$, $A(x)=\la(x)e$ and in this case $\tr(A\circ B)=0$ for any skew-symmetric endomorphism satisfying $B(e)=0$.
	\end{enumerate}
	
\end{Le}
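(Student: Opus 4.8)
The plan is to reduce the whole statement to one computation in an adapted basis, exactly in the spirit of the proof of Lemma \ref{le1}. First I would choose an isotropic vector $\bar e$ with $\langle e,\bar e\rangle=1$ and an orthonormal basis $(f_1,\ldots,f_r)$ of $\{e,\bar e\}^\perp$; since $\mathrm{span}(e,\bar e)$ has signature $(1,1)$, its orthogonal $\{e,\bar e\}^\perp$ is Euclidean, so $\langle f_i,f_j\rangle=\delta_{ij}$. This produces a pseudo-Euclidean basis $(e,\bar e,f_1,\ldots,f_r)$ of $V$, with respect to which I will read off $A$.

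The next step is to pin down the matrix of $A$ in this basis using only skew-symmetry and the hypothesis $A(e)=0$. The identities $\langle A(e),u\rangle=-\langle e,A(u)\rangle$ together with $\langle A(u),u\rangle=0$ force $A(\bar e)=\sum_k\gamma_k f_k$ and $A(f_i)=-\gamma_i e+\sum_k r_{ik}f_k$ with $R=(r_{ik})$ skew-symmetric; in particular $A$ preserves $e^\perp=\mathrm{span}(e,f_1,\ldots,f_r)$. Writing $A$ in block form for the splitting $\R e\oplus\R\bar e\oplus\{e,\bar e\}^\perp$, a direct multiplication shows that the only diagonal block of $A^2$ that survives is the Euclidean one, equal to $S^2$ with $S:=R^\top$, the couplings between $e$, $\bar e$ and the $f_i$ contributing nothing to the diagonal. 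Hence $\tr(A^2)=\tr(S^2)=-\sum_{i,k}r_{ik}^2\leq0$, which is item (1).

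For item (2), $\tr(A^2)=0$ is equivalent to $S=0$, i.e. $r_{ik}=0$ for all $i,k$, which means $A(f_i)=-\gamma_i e$ and $A(e)=0$; expanding an arbitrary $x\in e^\perp$ in the basis $(e,f_1,\ldots,f_r)$ then gives $A(x)=\la(x)e$ with $\la$ a linear form, and conversely this condition kills the $f$-block and forces $S=0$. Finally, given a second skew-symmetric $B$ with $B(e)=0$, I would write $B$ in the same block form $A(\bar e)$-style; since the $f$-block of $A$ now vanishes, multiplying the two block matrices shows every diagonal block of $A\circ B$ is zero, whence $\tr(A\circ B)=0$.

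I expect no genuine obstacle, since everything is elementary linear algebra once the adapted basis is fixed; the only delicate point is the bookkeeping of the block multiplication, namely checking that the cross terms cancel in the trace and that the sign in $A(f_i)=-\gamma_i e+\sum_k r_{ik}f_k$ is consistent with skew-symmetry. A conceptually cleaner equivalent route is to observe that $A$ descends to a skew-symmetric endomorphism $\bar A$ of the Euclidean quotient $e^\perp/\R e$ with $\tr(A^2)=\tr(\bar A^2)$, reducing (1) to the standard inequality on a Euclidean space and making the equality case transparent, as $\bar A=0$ means precisely $A(e^\perp)\subset\R e$.
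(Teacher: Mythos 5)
Your proposal is correct and follows essentially the same route as the paper: both fix the adapted basis $(e,\bar e,f_1,\ldots,f_r)$ and reduce $\tr(A^2)$ to minus a sum of squares arising from the Euclidean block, with the equality case $A(e^\perp)\subset\R e$ and the vanishing of $\tr(A\circ B)$ read off in the same way. The only cosmetic difference is that the paper skips your explicit block-matrix bookkeeping by observing that $\prs$ is nonnegative on $e^\perp$ with kernel $\R e$, so $\tr(A^2)=-\sum_i\langle Af_i,Af_i\rangle\leq 0$ immediately; your quotient reformulation via $\bar A$ on $e^\perp/\R e$ is an equivalent repackaging of that same observation.
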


\begin{proof} We choose a Lorentzian basis $\B=(e,\bar{e},f_1,\ldots,f_n)$ of $V$ such that $(e,f_1,\ldots,f_n)$ is a basis of $\{e\}^\perp$, $\bar{e}$ is isotropic, $\langle e,\bar{e}\rangle=1$ and $(f_1,\ldots,f_n)$ is an orthonormal basis of $\{e,\bar{e}\}^\perp$. Remark first that the restriction of $\prs$ to $\{e\}^\perp$ is nonnegative and, for any $x\in\{e\}^\perp$, $\langle x,x\rangle=0$ if and only if $x=\al e$. Now
	\[ \tr(A^2)=\langle A^2(e),\bar{e}\rangle+\langle A^2(\bar{e}),{e}\rangle
	-\sum_{i=1}^n\langle Af_i,Af_i\rangle=-\sum_{i=1}^n\langle Af_i,Af_i\rangle. \]	
	By using our remark, we deduce that $\tr(A^2)\leq0$ and $\tr(A^2)=0$ if and only if $Af_i=\al_if_i$ for $i=1,\ldots,n$. In this case, if $B$ is skew-symmetric and $B(e)=0$ then
	\[ \tr(A\circ B)=-\langle B(e),A(\bar{e})\rangle-\langle A(\bar{e}),B({e})\rangle
	-\sum_{i=1}^n\langle Af_i,Bf_i\rangle=0. \]	
\end{proof}

The study of Einstein left invariant pseudo-Riemannian metrics on Lie groups reduces to the study of their Lie algebras endowed with the corresponding pseudo-Euclidean product. We will refer to a Lie
algebra endowed with a nondegenerate symmetric inner
product as a \emph{pseudo-Euclidean Lie algebra}.

Let $(\G,\br,\prs)$ be a pseudo-Euclidean Lie algebra. Its  \emph{ Levi-Civita product}  is the bilinear map $\G\times\G\longrightarrow\G$, $(u,v)\mapsto u\cdot v$ given by the Koszul formula
\begin{eqnarray}\label{levicivita}2\langle
u\cdot v,w\rangle&=&\langle[u,v],w\rangle+\langle[w,u],v\rangle+
\langle[w,v],u\rangle.\end{eqnarray} We denote by 
$\mathrm{L}_u,\mathrm{R}_u:\G\too \G$ the corresponding left and right multiplication given by $\mathrm{R}_v(u)=\mathrm{L}_uv=uv$.
For any $u,v\in\G$, $\mathrm{L}_{u}:\G\too\G$ is skew-symmetric and $\ad_u=\mathrm{L}_{u}-\mathrm{R}_{u}$. The curvature of $\G$ is given by
\begin{eqnarray*}
	\label{curvature}K(u,v)w&=&\mathrm{L}_{[u,v]}w-[\mathrm{L}_{u},\mathrm{L}_{v}]w\\
	&=&[\mathrm{R}_w,\mathrm{L}_u](v)-\mathrm{R}_w\circ \mathrm{R}_u(v)+\mathrm{R}_{uw}(v).
\end{eqnarray*} From the last relation, we deduce that the 
 Ricci curvature $\mathrm{ric}:\G\times\G\too\R$ of $(\G,\br,\prs)$ is given by
\begin{eqnarray}\label{ric}
	\ric(u,v)&=&-\tr(\mathrm{R}_u\circ \mathrm{R}_v)+\tr(\mathrm{R}_{uv}).
\end{eqnarray} 
Since $\prs$ is non-degenerate, we can define the Ricci operator $\mathrm{Ric}:\G\longrightarrow\G$ by the expression 
$$\langle \mathrm{Ric}(u),v\rangle:=\ric(u,v).$$
To get a more useful formula of the Ricci curvature, we introduce $H\in\G$ and $J:\G\too \mathrm{so}(\G,\prs)$ such that for any $u,v\in\G$,
\begin{equation}\label{jh}
\langle H,u\rangle=\tr(\ad_u)\esp J_u(v)=\ad_v^*(u).
\end{equation}
Note that $H\in[\G,\G]^\perp$ and $H=0$ if and only if $\G$ is unimodular.

\begin{pr} We have
	\begin{eqnarray*}
		\ric(u,v)&=&-\frac12\tr(\ad_u\circ\ad_v)-\frac12\tr(\ad_u\circ
		\ad_v^*)-\frac14\tr(J_u\circ J_v)\\&&-\frac12{\langle}\ad_Hu,v{\rangle}-
		\frac12{\langle}\ad_Hv,u{\rangle}.\nonumber	
	\end{eqnarray*}
	
\end{pr}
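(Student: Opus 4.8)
The plan is to rewrite the Ricci curvature (\ref{ric}) entirely in terms of the endomorphisms $\ad_u$, $\ad_u^*$ and $J_u$, and for this the first task is to express the left and right multiplications through these. Starting from the Koszul formula (\ref{levicivita}) and moving every term onto the last slot by means of adjoints, one obtains $2\langle \mathrm{L}_uv,w\rangle=\langle w,[u,v]-\ad_u^*v-\ad_v^*u\rangle$, so that, $\prs$ being nondegenerate,
\[ 2\mathrm{L}_uv=[u,v]-\ad_u^*v-\ad_v^*u. \]
Reading this as a function of $u$ and recalling $\mathrm{R}_v(u)=\mathrm{L}_uv$ together with the defining relation $J_v(u)=\ad_u^*v$, the three terms become $-\ad_v(u)$, $-J_v(u)$ and $-\ad_v^*(u)$, whence
\[ \mathrm{R}_v=-\frac12\left(\ad_v+\ad_v^*+J_v\right). \]
Throughout I will use the two elementary facts that $\tr(A^*)=\tr(A)$ for any endomorphism $A$ and that each $J_w$ is skew-symmetric, so in particular $\tr(J_w)=0$.

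Next I would compute the term $\tr(\mathrm{R}_{uv})$. Substituting the formula for $\mathrm{R}$ and using $\tr(\ad_{uv}^*)=\tr(\ad_{uv})$ and $\tr(J_{uv})=0$ gives $\tr(\mathrm{R}_{uv})=-\tr(\ad_{uv})=-\langle H,uv\rangle$, the last equality being the definition (\ref{jh}) of $H$. I then expand $\langle H,\mathrm{L}_uv\rangle$ with the formula for $\mathrm{L}_u$ above: the term $\langle H,[u,v]\rangle$ vanishes because $H\in[\G,\G]^\perp$, and the remaining two terms, after passing the adjoints back onto $H$ and using $\ad_uH=-\ad_Hu$ and $\ad_vH=-\ad_Hv$, produce exactly $-\frac12\left(\langle\ad_Hu,v\rangle+\langle\ad_Hv,u\rangle\right)$. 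This already accounts for the last two summands of the claimed identity.

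It remains to treat $-\tr(\mathrm{R}_u\circ\mathrm{R}_v)=-\frac14\tr\big[(\ad_u+\ad_u^*+J_u)(\ad_v+\ad_v^*+J_v)\big]$. Expanding the product yields nine traces. The four purely adjoint ones combine in pairs: using $\tr(A)=\tr(A^*)$ and cyclicity one finds $\tr(\ad_u^*\ad_v^*)=\tr(\ad_u\ad_v)$ and $\tr(\ad_u^*\ad_v)=\tr(\ad_u\ad_v^*)$, which after the factor $-\frac14$ give the first two summands $-\frac12\tr(\ad_u\ad_v)-\frac12\tr(\ad_u\ad_v^*)$, while the term $\tr(J_uJ_v)$ gives the third. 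The heart of the argument, and the step I expect to be most delicate to present cleanly, is the cancellation of the four mixed traces, $\tr(\ad_uJ_v)+\tr(\ad_u^*J_v)+\tr(J_u\ad_v)+\tr(J_u\ad_v^*)=0$. This again follows from the two facts above: since $J_v$ is skew-symmetric and $\tr(A)=\tr(A^*)$, one has $\tr(\ad_u^*J_v)=\tr\big((\ad_u^*J_v)^*\big)=-\tr(J_v\ad_u)=-\tr(\ad_uJ_v)$, and likewise $\tr(J_u\ad_v^*)=-\tr(J_u\ad_v)$, so the four terms cancel in pairs. Assembling the contribution of $\tr(\mathrm{R}_{uv})$ with that of $-\tr(\mathrm{R}_u\circ\mathrm{R}_v)$ then gives the stated expression for $\ric(u,v)$.
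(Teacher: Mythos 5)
Your proof is correct and follows essentially the same route as the paper: both rest on the decomposition $\mathrm{R}_v=-\frac12(\ad_v+\ad_v^*+J_v)$ derived from the Koszul formula, substituted into $\ric(u,v)=-\tr(\mathrm{R}_u\circ\mathrm{R}_v)+\tr(\mathrm{R}_{uv})$, with the mixed traces cancelling by skew-symmetry of $J$ and the $H$-terms coming from $\tr(\mathrm{R}_{uv})=-\langle H,uv\rangle$. The only (harmless) variation is that you obtain $\tr(\mathrm{R}_{uv})=-\tr(\ad_{uv})$ directly from the trace identities $\tr(A^*)=\tr(A)$ and $\tr(J_{uv})=0$, whereas the paper computes it in an orthonormal basis via the Koszul formula.
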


\begin{proof} It is a consequence of \eqref{ric}, the following formula which can be deduced from \eqref{levicivita} $$\mathrm{R}_u=-\frac12\left(\ad_u+\ad_u^*\right)-\frac12J_u,$$and the following computation. For any orthonormal basis $(e_1,\ldots,e_n)$ of $\G$,
	\begin{eqnarray*}
	\tr(\mathrm{R}_{uv})&=&\sum_{i=1}^n\e_i\langle \mathrm{L}_{e_i}(uv),e_i\rangle\\
	&\stackrel{\eqref{levicivita}}=&-\sum_{i=1}^n\e_i\langle [uv,e_i],e_i\rangle\\
	&=&-\tr(\ad_{uv})\\
	&=&-\langle H,uv\rangle\\
	&=&-\frac12{\langle}\ad_Hu,v{\rangle}-
	\frac12{\langle}\ad_Hv,u{\rangle}.
	\end{eqnarray*}
	\end{proof}
	
	\begin{pr}\label{pr} If $\G$ is nilpotent  then
		\begin{equation*}
			\ric(u,v)=-\frac12\tr(\ad_u\circ
			\ad_v^*)-\frac14\tr(J_u\circ J_v).	
		\end{equation*}	In particular, its Ricci operator $\Ri:\G\too\G$ is given by		
		\begin{equation} \label{ricci1} \mathrm{Ric}=-\frac12{\mathcal
			J}_1+\frac14{\mathcal J}_2, \end{equation} where $\mathcal{J}_1$ and $\mathcal{J}_2$ are the auto-adjoint endomorphisms given by 
		\begin{equation}\label{12} \langle \mathcal{J}_1(u),v\rangle=\tr(\ad_u\circ
		\ad_v^*)\esp  \langle \mathcal{J}_2(u),v\rangle=-\tr(J_u\circ
		J_v). \end{equation} 
	\end{pr}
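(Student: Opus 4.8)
The plan is to obtain the nilpotent formula by specializing the general expression for $\ric$ proved in the previous proposition, using two standard consequences of nilpotency, and then to read off the Ricci operator from the definitions \eqref{12}.

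First I would note that a nilpotent Lie algebra is unimodular. Indeed, each $\ad_u$ is a nilpotent endomorphism, hence $\tr(\ad_u)=0$ for every $u\in\G$; by \eqref{jh} this says $\langle H,u\rangle=0$ for all $u$, and nondegeneracy of $\prs$ forces $H=0$. Thus the two terms $-\frac12\langle\ad_Hu,v\rangle-\frac12\langle\ad_Hv,u\rangle$ in the general formula drop out. Next I would show that the Killing-type term vanishes, that is $\tr(\ad_u\circ\ad_v)=0$ for all $u,v$. Using the lower central series $\G=C^0\supset C^1\supset\cdots\supset\{0\}$ with $[\G,C^i]\subset C^{i+1}$, a basis adapted to this flag makes every $\ad_u$ strictly upper-triangular relative to the flag, since it sends $C^i$ into $C^{i+1}$; hence $\ad_u\circ\ad_v$ sends $C^i$ into $C^{i+2}$ and therefore has zero trace. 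Removing this term together with the $H$-terms from the general formula leaves precisely $\ric(u,v)=-\frac12\tr(\ad_u\circ\ad_v^*)-\frac14\tr(J_u\circ J_v)$.

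Finally, to obtain \eqref{ricci1} I would verify that \eqref{12} indeed defines self-adjoint endomorphisms matching these two terms. The bilinear form $(u,v)\mapsto\tr(\ad_u\circ\ad_v^*)$ is symmetric, since trace is preserved under the $\prs$-adjoint and $(\ad_u\circ\ad_v^*)^*=\ad_v\circ\ad_u^*$; likewise $(u,v)\mapsto\tr(J_u\circ J_v)$ is symmetric because $\tr(AB)=\tr(BA)$. By nondegeneracy of $\prs$ each symmetric form is represented by a unique self-adjoint operator, namely $\mathcal{J}_1$ and $\mathcal{J}_2$, and comparing $\langle\Ri(u),v\rangle=\ric(u,v)$ with the reduced formula yields $\Ri=-\frac12\mathcal{J}_1+\frac14\mathcal{J}_2$. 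The only genuinely substantive input is the vanishing of the Killing form of a nilpotent Lie algebra; the remaining manipulations are routine bookkeeping with traces and adjoints.
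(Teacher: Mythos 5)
Your proof is correct and follows exactly the route the paper intends: Proposition \ref{pr} is stated without proof because it is the specialization of the preceding general Ricci formula, using precisely the two facts you establish (nilpotency gives $\tr(\ad_u)=0$, hence $H=0$, and the vanishing of the Killing form via a flag-adapted basis). Your verification that \eqref{12} defines self-adjoint operators representing the two surviving terms is routine and sound, so nothing is missing.
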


\begin{rem}\label{rem1} The endomorphisms $J_u$ are skew-symmetric and $J_u=0$ if and only if $u\in[\G,\G]^\perp$. So if $\prs$ is Euclidean then, for any $u\in\G$, $\langle \mathcal{J}_i(u),u\rangle\geq0$ $(i=1,2)$, $\ker \mathcal{J}_1=Z(\G)$ and $\ker \mathcal{J}_2=[\G,\G]^\perp$.
\end{rem}

The operators ${\mathcal J}_1$ and ${\mathcal J}_2$ will play a crucial role in our study so we are going to express them in a useful way. This is based on the notion of structure endomorphisms we now introduce.

Let $(\G,\br,\prs)$ be a pseudo-Euclidean Lie algebra and
$(e_1,\ldots,e_p)$ a basis of
$\G$. For any $u,v\in\G$, the Lie bracket can be written
\begin{equation}\label{bracket}[u,v]=\sum_{i=1}^p\langle S_iu,v\rangle
e_i,\end{equation}where $S_i:\G\too\G$ are skew-symmetric
endomorphisms with respect to $\prs$. 
 The family $(S_1,\ldots,S_p)$ will be called
\emph{structure endomorphisms} associated to $(e_1,\ldots,e_p)$. Note that $Z(\G)=\cap_{i=1}^p\ker S_i$
and one can see  easily from \eqref{bracket} and the definition of $J$ in \eqref{jh}  that, for
any
$u\in\G$,
\begin{equation}\label{J}J_u=\sum_{i=1}^p\langle u,e_i\rangle S_i\end{equation}
The  following proposition will be very useful later.
\begin{pr}Let $(\G,\prs)$ be a pseudo-Euclidean   Lie algebra,
	$(e_1,\ldots,e_p)$  a basis of
	$\G$ and $(S_1,\ldots,S_p)$ the corresponding structure endomorphisms. Then 
	\begin{equation}\label{invariant}
	{\mathcal J}_1=-\sum_{i,j=1}^p\langle e_i,e_j\rangle S_i\circ S_j\quad
	\mbox{and}\quad {\mathcal J}_2u=-\sum_{i,j=1}^p\langle e_i,u\rangle{\tr}(S_i\circ
	S_j)e_j.\end{equation}In particular, $\tr{\mathcal J}_1=\tr{\mathcal J}_2$.

\end{pr}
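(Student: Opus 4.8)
The plan is to work directly from the defining relations \eqref{12}, the expansion $J_u=\sum_{i=1}^p\langle u,e_i\rangle S_i$ from \eqref{J}, and the skew-symmetry $S_i^*=-S_i$. The formula for $\mathcal{J}_2$ is essentially immediate: substituting \eqref{J} into $\langle\mathcal{J}_2(u),v\rangle=-\tr(J_u\circ J_v)$ and using bilinearity of the trace gives
\[
\langle\mathcal{J}_2(u),v\rangle=-\sum_{i,j=1}^p\langle u,e_i\rangle\langle v,e_j\rangle\tr(S_i\circ S_j).
\]
Since $\langle e_j,v\rangle$ is the coefficient of $e_j$, reading off the vector whose inner product against every $v$ reproduces the right-hand side yields $\mathcal{J}_2u=-\sum_{i,j}\langle e_i,u\rangle\tr(S_i\circ S_j)e_j$, which is the claimed expression.

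The real content is the formula for $\mathcal{J}_1$, where I must compute the trace of $\ad_u\circ\ad_v^*$ in a basis that need not be orthonormal. First I would rewrite the adjoint using $\ad_v^*(w)=J_w(v)=\sum_i\langle w,e_i\rangle S_iv$, which comes from the identity $J_w(v)=\ad_v^*(w)$ in \eqref{jh} combined with \eqref{J}. The bracket expansion \eqref{bracket} then gives
\[
\ad_u\circ\ad_v^*(w)=\sum_{i=1}^p\langle w,e_i\rangle[u,S_iv]=\sum_{i,j=1}^p\langle w,e_i\rangle\langle S_ju,S_iv\rangle e_j.
\]

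To take the trace I would introduce an \emph{auxiliary} orthonormal basis $(\epsilon_1,\ldots,\epsilon_p)$ with $\e_k=\langle\epsilon_k,\epsilon_k\rangle=\pm1$, use $\tr(T)=\sum_k\e_k\langle T\epsilon_k,\epsilon_k\rangle$, and invoke the resolution of identity $\sum_k\e_k\langle a,\epsilon_k\rangle\langle b,\epsilon_k\rangle=\langle a,b\rangle$. Applied to the displayed expression, the sum over $k$ collapses the factor $\sum_k\e_k\langle\epsilon_k,e_i\rangle\langle e_j,\epsilon_k\rangle$ into $\langle e_i,e_j\rangle$, giving $\tr(\ad_u\circ\ad_v^*)=\sum_{i,j}\langle e_i,e_j\rangle\langle S_ju,S_iv\rangle$. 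Finally the skew-symmetry $S_i^*=-S_i$ turns $\langle S_ju,S_iv\rangle$ into $-\langle S_iS_ju,v\rangle$, so that $\langle\mathcal{J}_1(u),v\rangle=\langle-\sum_{i,j}\langle e_i,e_j\rangle S_i\circ S_j(u),v\rangle$, yielding the stated formula for $\mathcal{J}_1$.

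For the last assertion I would take the trace of each operator with the same orthonormal basis. For $\mathcal{J}_2$ the resolution of identity converts $\tr\mathcal{J}_2=-\sum_k\e_k\sum_{i,j}\langle\epsilon_k,e_i\rangle\langle\epsilon_k,e_j\rangle\tr(S_i\circ S_j)$ into $-\sum_{i,j}\langle e_i,e_j\rangle\tr(S_i\circ S_j)$, and this same sum is visibly $\tr\mathcal{J}_1$ from the formula just obtained. The only genuine subtlety, and hence the step I would watch most carefully, is the bookkeeping of the trace in a non-orthonormal basis: the whole argument hinges on the resolution of identity $\sum_k\e_k\langle a,\epsilon_k\rangle\langle b,\epsilon_k\rangle=\langle a,b\rangle$, which is what allows a computation set up in the arbitrary basis $(e_i)$ to be evaluated, together with keeping the sign coming from $S_i^*=-S_i$ consistent throughout.
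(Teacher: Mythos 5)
Your proof is correct and follows essentially the same route as the paper: both expand $\ad_u\circ\ad_v^*$ via \eqref{bracket} and \eqref{J} into rank-one pieces and use skew-symmetry of the $S_i$, and your resolution-of-identity computation $\sum_k\e_k\langle a,\epsilon_k\rangle\langle b,\epsilon_k\rangle=\langle a,b\rangle$ is precisely what the paper abbreviates as ``clearly $\tr(K_{i,j})=\langle e_i,e_j\rangle$'' for the rank-one operators $K_{i,j}v=\langle v,e_j\rangle e_i$. The only cosmetic differences are that the paper polarizes from the diagonal case $u=v$ while you carry $u,v$ throughout, and that you spell out the trace identity $\tr\mathcal{J}_1=\tr\mathcal{J}_2$ which the paper leaves implicit.
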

\begin{proof} The expression of ${\mathcal J}_2$ is an immediate consequence of \eqref{J}.
 We have
\begin{eqnarray*}
	\ad_u\circ\ad_u^*v&=&\ad_u\circ J_vu\\
	&\stackrel{(\ref{bracket})}=&\sum_{i=1}^p\langle S_iu,J_vu\rangle e_i\\
	&\stackrel{(\ref{J})}=&\sum_{i,j}\langle S_iu,S_ju\rangle \langle v,e_j\rangle e_i\\
	&=&-\sum_{i,j}\langle (S_j\circ S_i)(u),u\rangle K_{i,j}v,\end{eqnarray*}where
$K_{i,j}v=\langle v,e_j\rangle e_i$. Clearly $\tr(K_{i,j})=\langle e_i,e_j\rangle$, thus $\tr(\ad_u\circ\ad_v^*)=-\sum_{i,j}\langle S_j\circ S_i u,v\rangle\langle
e_i,e_j\rangle$ which gives the desired formula of ${\mathcal J}_1$.
\end{proof}

\section{Some results on  Einstein Lorentzian nilpotent Lie algebras}\label{section3}

In this section,  we will show  that for an Einstein Lorentzian nilpotent Lie algebra if $[\G,\G]$ is nondegenerate (resp. $Z(\G)$ is nondegenerate) then it is Lorentzian (resp. it is Euclidean). If $[\G,\G]$ is degenerate then $[\G,\G]\cap[\G,\G]^\perp\subset Z(\G)$ and $\G$ is Ricci flat. If $\dim[\G,\G]=\dim([\G,\G]\cap Z(\G))+1$ and $\G$ carries a Ricci flat Lorentzian metric then $Z(\G)$ is degenerate. Finally, we will use our approach to recover some results proved in \cite{Dconti1}.

Before going further, let us give the following remark which we will use frequently. Let $(\G,\br,\prs)$ be a  pseudo-Euclidean Lie algebra. From \eqref{jh}, one can easily deduce that $\ker J=[\G,\G]^\perp$ and hence
\begin{equation}\label{re} Z(\G)\subset\ker\mathcal{J}_1:=M\esp [\G,\G]^\perp\subset\ker\mathcal{J}_2:=N. \end{equation}
Since  $\mathcal{J}_1$ and $\mathcal{J}_2$ are symmetric
\begin{equation}\label{re1} \mathrm{Im}\mathcal{J}_1=M^\perp\subset Z(\G)^\perp\esp \mathrm{Im}\mathcal{J}_2=N^\perp\subset [\G,\G].  \end{equation}

\begin{pr}\label{pr1} Let $(\G,\br,\prs)$ be an Einstein Lorentzian nilpotent non abelian Lie algebra. If $[\G,\G]$ is non degenerate then it is Lorentzian.
	
\end{pr}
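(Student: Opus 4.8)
The plan is to argue by contradiction. By the trichotomy for subspaces of a Lorentzian space recalled in Section \ref{section2}, a nondegenerate $[\G,\G]$ is either Lorentzian or Euclidean, so I assume $[\G,\G]$ is nondegenerate \emph{Euclidean} and aim for a contradiction with non-abelianness. Then $[\G,\G]^\perp$ is nondegenerate Lorentzian, hence contains a unit timelike vector $e_0$ with $\langle e_0,e_0\rangle=-1$. I complete $e_0$ to an orthonormal basis $(e_0,e_1,\ldots,e_{n-1})$, where $n=\dim\G$ and $E:=\mathrm{span}(e_1,\ldots,e_{n-1})=e_0^\perp$ is Euclidean, and I take $(S_0,\ldots,S_{n-1})$ to be the associated structure endomorphisms. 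Since $e_0\in[\G,\G]^\perp=\ker J$, formula \eqref{J} gives $J_{e_0}=\langle e_0,e_0\rangle S_0=-S_0=0$, so $S_0=0$; equivalently $[\G,\G]\subseteq E$, and $E$ is a Euclidean nilpotent ideal.

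Next I read off $\lambda$ and the global trace. Because $e_0\in[\G,\G]^\perp\subseteq\ker\mathcal{J}_2$ (see \eqref{re}) and $\mathrm{Ric}=-\frac12\mathcal{J}_1+\frac14\mathcal{J}_2=\lambda\,\mathrm{Id}$ by \eqref{ricci1}, evaluating on $e_0$ yields $\mathcal{J}_1 e_0=-2\lambda e_0$, hence $\langle\mathcal{J}_1 e_0,e_0\rangle=2\lambda$. Using \eqref{invariant} with $S_0=0$ we have $\mathcal{J}_1=-\sum_i\e_i S_i^2=-\sum_{k\ge1}S_k^2$, so by skew-symmetry $\langle\mathcal{J}_1 e_0,e_0\rangle=\sum_{k\ge1}\langle S_k e_0,S_k e_0\rangle$; since $\langle S_k e_0,e_0\rangle=0$ each $S_k e_0$ lies in the Euclidean space $E$, giving $2\lambda=\sum_{k\ge1}\|S_k e_0\|^2\geq0$. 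Independently, the identity $\tr\mathcal{J}_1=\tr\mathcal{J}_2$ from \eqref{invariant} together with $\tr\mathrm{Ric}=\lambda n$ forces $\tr\mathcal{J}_1=-4\lambda n$.

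The heart of the argument, and the step I expect to be the main obstacle, is an exact evaluation of $\tr(S_k^2)$ that isolates how the timelike direction enters. Writing each $S_k$ in block form relative to the orthogonal splitting $\G=\R e_0\oplus E$, I set $a_k:=S_k e_0\in E$ and verify that for $x\in E$ one has $S_k x=\langle x,a_k\rangle e_0+P_k x$, where $P_k:E\too E$ is skew-symmetric for the (Euclidean) induced product. A direct computation of $S_k^2$ on $e_0$ and on the $e_j$ then yields
\[
\tr(S_k^2)=2\|a_k\|^2+\tr(P_k^2),
\]
all norms and the last trace being taken in $E$. Here the Lorentzian signature is decisive: the timelike vector $e_0$ contributes the \emph{positive} term $2\|a_k\|^2$, whereas $\tr(P_k^2)\leq0$ because $P_k$ is skew-symmetric on a Euclidean space.

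Finally I combine the two expressions for $\tr\mathcal{J}_1$. Summing the displayed identity over $k$ gives $\tr\mathcal{J}_1=-\sum_{k\ge1}\tr(S_k^2)=-2\sum_k\|a_k\|^2-\sum_k\tr(P_k^2)$; equating this with $-4\lambda n=-2n\sum_k\|a_k\|^2$ (using $2\lambda=\sum_k\|a_k\|^2$) produces
\[
\sum_{k\ge1}\big(-\tr(P_k^2)\big)=-2(n-1)\sum_{k\ge1}\|a_k\|^2.
\]
The left-hand side is $\geq0$ while the right-hand side is $\leq0$ (as $n\geq2$), so both vanish; hence every $a_k=0$ and every $P_k=0$, i.e. all $S_k=0$ and $\G$ is abelian, contradicting the hypothesis. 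This contradiction rules out the Euclidean case, so $[\G,\G]$ must be Lorentzian.
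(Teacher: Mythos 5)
Your proof is correct, and while it follows the same global strategy as the paper's (assume $[\G,\G]$ nondegenerate Euclidean; get $\la\geq 0$ by evaluating $\mathrm{Ric}=\la\,\mathrm{Id}$ on a distinguished direction of $[\G,\G]^\perp$ annihilated by $\mathcal{J}_2$; get $\la\leq 0$ from the trace identity $\tr\mathcal{J}_1=\tr\mathcal{J}_2$; conclude all structure endomorphisms vanish), the mechanism you use is genuinely different. The paper exploits nilpotency to get $\dim[\G,\G]^\perp\geq 2$, picks a \emph{pair of isotropic vectors} $e,\bar e\in[\G,\G]^\perp$ with $\langle e,\bar e\rangle=1$, and invokes Lemma \ref{le1} to force the eigenvector relations $S_ie=\al_ie$, $S_i\bar e=-\al_i\bar e$, whence $\la=\frac12\sum_i\al_i^2$ and, restricting to the Euclidean space $\{e,\bar e\}^\perp$, the identity $\tr(S_i^2)=2\al_i^2+\tr(K_i^2)$ with $\tr(K_i^2)\leq 0$; it also indexes the structure endomorphisms by an orthonormal basis of $[\G,\G]$ alone. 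You instead take a single \emph{unit timelike} vector $e_0\in[\G,\G]^\perp$ (which exists as soon as $[\G,\G]^\perp$ is Lorentzian, with no dimension count), use the splitting $\G=\R e_0\oplus E$, and replace Lemma \ref{le1} by the explicit block computation $S_kx=\langle x,a_k\rangle e_0+P_kx$ yielding $\tr(S_k^2)=2\|a_k\|^2+\tr(P_k^2)$ and $2\la=\sum_k\|a_k\|^2$ --- structurally the exact analogue of the paper's $\tr(S_i^2)=2\al_i^2+\tr(K_i^2)$ with $2\la=\sum_i\al_i^2$, and I checked both this identity and your final balance equation $\sum_k\bigl(-\tr(P_k^2)\bigr)=-2(n-1)\sum_k\|a_k\|^2$; the signs all work out. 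What each approach buys: yours is slightly more self-contained and economical in hypotheses, since nilpotency enters only through the Ricci formula \eqref{ricci1} (and trivially in ``all $S_k=0$ implies abelian''), avoiding both Lemma \ref{le1} and the nilpotency-based bound $\dim[\G,\G]^\perp\geq 2$; the paper's isotropic-pair method costs those inputs but produces the cleaner eigenvalue picture for the $S_i$ on null directions, which is the tool it reuses in Proposition \ref{pr3} and Theorem \ref{main}, where no timelike vector orthogonal to the relevant subspace is available.
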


\begin{proof} Suppose that $[\G,\G]$ is nondegenerate Euclidean,  choose an orthonormal basis $(e_1,\ldots,e_d)$ of $[\G,\G]$ and denote by $(S_1,\ldots,S_d)$ the associated structure endomorphisms. According to \eqref{ricci1} and  \eqref{invariant}, we have 
	\[ -\frac12\mathcal{J}_1+\frac14\mathcal{J}_2=\la\mathrm{Id}_\G,\;
	\mathcal{J}_1=-\sum_{i=1}^dS_i^2 \esp \mathcal{J}_2u=-\sum_{i,j=1}^d\langle u,e_i\rangle\tr(S_i\circ S_j)e_j.    \]
	Since $\G$ is nilpotent then $\dim[\G,\G]^\perp\geq2$ and we can choose a couple $(e,\bar{e})$  of isotropic vectors in $[\G,\G]^\perp$ such that $\langle e,\bar{e}\rangle=1$. By replacing in the relations above and using \eqref{re}, we get
	\[\frac12\mathcal{J}_1e=-\la e,\; \frac12\mathcal{J}_1\bar{e}=-\la \bar{e}\esp \sum_{i=1}^d\langle S_ie,S_ie\rangle=\sum_{i=1}^d\langle S_i\bar{e},S_i\bar{e}\rangle=0. \]
	By using Lemma \ref{le1}, we deduce that for any $i\in\{1,\ldots,d \}$, $S_ie=\al_i e$ and $S_i\bar{e}=-\al_i \bar{e}$ and hence
	\[ \la=\frac12\sum_{i=1}^d\al_i^2\geq0. \]
	For $i=1,\ldots,d$, $S_i$ is skew-symmetric and leaves invariant $\mathrm{span}\{e,\bar{e}\}$ so it leaves invariant its orthogonal.
	We denote by $K_i$ the restriction of $S_i$ to the Euclidean vector space $\{e,\bar{e}\}^\perp$. We have $\tr(S_i^2)=2\al_i^2+\tr(K_i^2)$ and $\tr(K_i^2)\leq0$. Now, since $\tr(\mathcal{J}_1)=\tr(\mathcal{J}_2)$, we get
	\[ (\dim\G)\la=-\frac14\tr(\mathcal{J}_1)=\frac14\sum_{i=1}^d(2\al_i^2+\tr(K_i^2))=\la+\frac14\sum_{i=1}^d\tr(K_i^2). \]This shows that $\la\leq0$. By summing up the results we obtained, we deduce that $\la=0$, $\tr(K_i^2)=0$ and $\alpha_i^2=0$ which implies that $S_i=0$ for $i=1,\ldots,d$.
	Thus $\G$ is abelian which is a contradiction and completes the proof.
\end{proof}

\begin{pr}\label{pr2} Let $(\G,\prs)$ be an Einstein pseudo-Euclidean non abelian nilpotent Lie algebra. If $Z(\G)$ is nondegenerate then $Z(\G)^\perp$  is not Euclidean.
	
\end{pr}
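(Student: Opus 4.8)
The plan is to argue by contradiction: assume $Z(\G)^\perp$ is Euclidean. Since $Z(\G)$ is nondegenerate we have the orthogonal splitting $\G=\mathfrak{z}\oplus\mathfrak{v}$ with $\mathfrak{z}=Z(\G)$ and $\mathfrak{v}=Z(\G)^\perp$ Euclidean. First I would record the basic structural facts: every structure endomorphism $S_i$ vanishes on $\mathfrak{z}$ (because $Z(\G)=\cap_i\ker S_i$) and, being skew-symmetric, preserves the nondegenerate complement $\mathfrak{v}$, hence restricts to a skew-symmetric endomorphism $\tilde S_i$ of the Euclidean space $\mathfrak{v}$. In particular, for $z\in\mathfrak{z}$ the operator $J_z=\sum_i\langle z,e_i\rangle S_i$ is skew-symmetric, kills $\mathfrak{z}$ and maps into $\mathfrak{v}$, so $\tr(J_z^2)=\tr\big((J_z|_{\mathfrak{v}})^2\big)\le 0$. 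Using \eqref{ricci1} together with $Z(\G)\subset\ker\mathcal{J}_1$ (see \eqref{re}), the Einstein condition gives $\mathcal{J}_2(z)=4\la z$ for every $z\in\mathfrak{z}$, whence, by \eqref{12},
\begin{equation*}
4\la\langle z,z\rangle=\langle\mathcal{J}_2(z),z\rangle=-\tr(J_z^2)\ge 0\qquad(z\in\mathfrak{z}).
\end{equation*}
Thus the quadratic form $\la\prs|_{\mathfrak{z}}$ is positive semidefinite.

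Next I would run a trichotomy on the sign of $\la$. If $\la=0$, then $\mathcal{J}_2$ vanishes on $\mathfrak{z}$; by the formula for $\mathcal{J}_2$ in \eqref{invariant} this forces $\tr(S_a\circ S_j)=0$ for every central basis vector $e_a$ and all $j$, and in particular $\tr(S_a^2)=\tr(\tilde S_a^2)=0$, so $\tilde S_a=0$ and hence $S_a=0$. Then \eqref{bracket} shows $[\G,\G]\subset\mathfrak{v}$, contradicting $[\G,\G]\cap Z(\G)\neq\{0\}$, which holds for any non-abelian nilpotent Lie algebra. Hence $\la\neq0$, and the semidefiniteness above makes $\mathfrak{z}$ definite with the sign of $\la$. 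If $\la>0$ then $\mathfrak{z}$ is Euclidean, so $\G=\mathfrak{z}\oplus\mathfrak{v}$ is Euclidean; but then $\mathcal{J}_1\ge0$ (Remark \ref{rem1}) and, by \eqref{ricci1} and the identity $\tr\mathcal{J}_1=\tr\mathcal{J}_2$ of \eqref{invariant}, the scalar curvature equals $-\tfrac14\tr\mathcal{J}_1\le0$, contradicting that it equals $\la\dim\G>0$. This leaves the genuinely hard case $\la<0$, where $\mathfrak{z}=Z(\G)$ is negative definite.

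For this last case the plan is to pass to the quotient. Since $\mathfrak{z}$ is central, the $\mathfrak{v}$-component of the bracket endows $\mathfrak{v}\cong\G/Z(\G)$ with a nilpotent Lie algebra structure $\sigma$, and $(\mathfrak{v},\sigma,\prs|_{\mathfrak{v}})$ is a \emph{Riemannian} nilpotent Lie algebra whose structure endomorphisms are exactly the $\tilde S_k$ with $e_k\in\mathfrak{v}$. Comparing the Ricci forms via \eqref{ricci1}, \eqref{12} and the splitting of $\mathcal{J}_1,\mathcal{J}_2$ over the two families of structure endomorphisms (the central $\tilde S_a$ enter $\mathcal{J}_1$ with the sign $\e_a=-1$, while $\mathcal{J}_2$ restricted to $\mathfrak{v}$ depends only on the $\tilde S_k$), I expect the identity
\begin{equation*}
\langle \Ri_{\mathfrak{v}}(u),u\rangle=\la\langle u,u\rangle-\frac12\sum_{e_a\in\mathfrak{z}}\langle \tilde S_a u,\tilde S_a u\rangle\le\la\langle u,u\rangle<0\qquad(0\neq u\in\mathfrak{v}),
\end{equation*}
so that $(\mathfrak{v},\sigma)$ has negative definite Ricci operator. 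This is impossible: if $\sigma\neq0$, choosing $0\neq z\in Z(\mathfrak{v})\cap[\mathfrak{v},\mathfrak{v}]$ yields $\langle\Ri_{\mathfrak{v}}(z),z\rangle=\tfrac14\langle\mathcal{J}_2^{\mathfrak{v}}(z),z\rangle>0$ by Remark \ref{rem1}, while if $\sigma=0$ then $\Ri_{\mathfrak{v}}=0$; either way the displayed inequality is violated. The main obstacle is precisely this case $\la<0$: the positivity arguments on $\mathfrak{z}$ alone remain consistent with it, and the contradiction seems to require isolating the Riemannian quotient $\G/Z(\G)$ and exploiting that a non-abelian Riemannian nilpotent Lie algebra always carries a direction of strictly positive Ricci curvature.
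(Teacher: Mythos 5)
Your proof is correct, and in its decisive case it reconstructs exactly the paper's central identity, but the overall route is organized differently. The paper runs no trichotomy on $\la$: since $Z(\G)^\perp$ is Euclidean, all timelike directions can be chosen inside $Z(\G)$, and the paper splits $\G=\mathrm{span}\{e_1,\ldots,e_p\}\oplus\G_0$ along these timelike central vectors only (so its Euclidean complement $\G_0$ still contains the spacelike part of the center, unlike your $\mathfrak{v}=Z(\G)^\perp$). Evaluating the Einstein equation at each $e_i$ then yields simultaneously $\la=\frac14\tr(K_i^2)\le 0$ and the formula $\Ri_{\prs_0}=\la\mathrm{Id}_{\G_0}+\frac12\sum_{i=1}^p K_i^2$, which is precisely the identity you say you ``expect'' in your $\la<0$ case --- and it is indeed routine to verify: project the Einstein equation onto $\mathfrak{v}$, using that $\mathcal{J}_1$ preserves $\mathfrak{v}$ and that the cross-traces between central and non-central structure endomorphisms only enter the $\mathfrak{z}$-component (the paper performs the analogous check when it shows $\tr(K_i\circ M_j)=0$), so the hedge can safely be removed. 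The paper then finishes in one stroke by citing Milnor \cite[Theorem 2.4]{milnor}: a non-abelian nilpotent Euclidean Lie algebra always has a Ricci-positive direction. Your version buys two things at the price of length: it is self-contained, replacing the Milnor citation by the direct observation that a nonzero $z\in Z(\mathfrak{v})\cap[\mathfrak{v},\mathfrak{v}]$ satisfies $\langle \Ri_{\mathfrak{v}}(z),z\rangle=\frac14\langle\mathcal{J}_2^{\mathfrak{v}}(z),z\rangle>0$, which is exactly Remark \ref{rem1}; and your preliminary inequality $4\la\langle z,z\rangle=-\tr(J_z^2)\ge 0$ on the center cleanly isolates what the Einstein constant forces on the signature of $Z(\G)$, enabling the short independent disposals of $\la=0$ (all central structure endomorphisms vanish, contradicting $Z(\G)\cap[\G,\G]\ne\{0\}$ for a non-abelian nilpotent algebra) and $\la>0$ (Euclidean scalar curvature equals $-\frac14\tr\mathcal{J}_1\le 0$), whereas the paper's single computation covers all signs of $\la$ uniformly because $\la=\frac14\tr(K_i^2)\le 0$ falls out automatically. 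Both proofs ultimately rest on the same mechanism: a Euclidean nilpotent algebra whose Ricci operator is $\la\mathrm{Id}$ plus a negative-semidefinite correction, defeated by the existence of a positive-Ricci direction.
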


\begin{proof} Denote by $(p,q)=(-,\ldots-,+,\ldots,+)$ the signature of $\prs$.
	Suppose that $Z(\G)$ is nondegenerate and $Z(\G)^\perp$ is Euclidean. Then we can choose  an orthogonal family  
	$(e_1,\ldots,e_p)$ in $Z(\G)$ such that $\langle e_i,e_i\rangle=-1$ for $i=1,\ldots,p$. We have $\G=\mathrm{span}\{e_1,\ldots,e_p\}\oplus\G_0$, where $\G_0=\{e_1,\ldots,e_p\}^\perp$.
	For any $u,v\in\G_0$, put 
	\begin{equation}\label{g0}[u,v]=\sum_{i=1}^p\langle K_iu,v\rangle e_i+[u,v]_0,\end{equation} where $K_i:\G_0\too\G_0$ are skew-symmetric and $[u,v]_0\in\G_0$. Let $\prs_0$ be the restriction of $\prs$ to $\G_0$.
It is obvious that $(\G_0,\br_0,\prs_0)$ is an Euclidean nilpotent Lie algebra.
			We claim that if $(\G,\prs)$ is Einstein then $\la=\frac14\tr(K_i^2)\leq0$ for $i=1,\ldots,p$. Moreover
		$$\Ri_{\prs_0}=\la\mathrm{Id}_{\G_0}+\frac12\sum_{i=1}^pK_i^2.\eqno(*)$$

	This implies that the Ricci curvature of $(\G_0,\prs_0)$ is nonpositive. But a non abelian nilpotent Euclidean Lie algebra has always a Ricci negative direction and a Ricci positive direction (see \cite[Theorem 2.4]{milnor}).  So the only possibility is  $K_i=0$ for $i=1,\ldots,p$ and $\G_0$ is abelian. We get a contradiction which completes the proof.

	 Let us prove our claim. We choose an orthonormal basis $\B_1=(f_1,\ldots,f_q)$ of $\G_0$. Then $\B=(e_1,\ldots,e_p,f_1,\ldots,f_q)$ is an orthonormal basis of $\G$. Denote by $(S_1,\ldots,S_p,T_1,\ldots,T_q)$ the structure endomorphisms of $(\G,\prs)$ with respect to $\B$ and $(M_1,\ldots,M_q)$ the structure endomorphisms of $(\G_0,\prs_0)$ with respect to $\B_1$. The $S_i$ and the $T_i$ vanish on $Z(\G)$ and hence leave invariant $\G_0$. By using \eqref{g0}, one can easily see that, for $i=1,\ldots,p$ and $j=1,\ldots,q$,
	 \[ (S_i)_{|\G_0}=K_i\esp (T_j)_{|\G_0}=M_j. \]  
	  If $\G$ is Einstein then, according to  \eqref{invariant}, we have
	\[ -\frac12\sum_{i=1}^p S_i^2+\frac12\sum_{i=1}^q T_i^2+\frac14\mathcal{J}_2=\la\mathrm{Id}_\G,\eqno(**) \] where
	\[ \mathcal{J}_2=-\sum_{i,j}\langle e_i,\bullet\rangle\tr(K_i\circ K_j)e_j
	-\sum_{i,j}\langle f_i,\bullet\rangle\tr(M_i\circ M_j)f_j 
	-\sum_{i,j}\langle f_i,\bullet\rangle\tr(M_i\circ K_j)e_j-\sum_{i,j}\langle e_i,\bullet\rangle\tr(K_i\circ M_j)f_j.\]
	If we evaluate the relation $(**)$ at $e_i$, we get
	\[ \frac14\tr( K_i^2)e_i+\frac14\sum_{j=1}^q\tr( K_i\circ M_j )f_j=\la e_i. \]	
	This is equivalent to $\la=\frac14\tr( K_i^2)$ and $\tr( K_i\circ M_j )=0$ for $i\in\{1,\ldots,p \}$ and $i\in\{1,\ldots,q \}$. This implies that if we restrict $(**)$ to $\G_0$ we get the desired relation $(*)$.	
\end{proof}

\begin{co}\label{co}	Let $(\G,\br,\prs)$ be an Einstein Lorentzian non abelian nilpotent Lie algebra. If $Z(\G)$ is nondegenerate then it is Euclidean.\end{co}
The following result was first found by Guediri in \cite{guediri} and served as a key ingredient in the classification of Einstein Lorentzian $2$-step nilpotent Lie algebras. It can also be deduced from the preceding results. 
\begin{co}\label{co0} Let  $(\G,\br,\prs)$ be an Einstein Lorentzian non abelian 2-step nilpotent Lie algebra. Then $Z(\G)$ is  degenerate. 
	
\end{co}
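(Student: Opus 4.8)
The plan is to argue by contradiction, reducing everything to the two preceding structural results together with the defining feature of 2-step nilpotency. First I would suppose that $Z(\G)$ is nondegenerate. Since $(\G,\br,\prs)$ is Einstein Lorentzian and non abelian, Corollary \ref{co} applies directly and gives that $Z(\G)$ is Euclidean, i.e. the restriction of $\prs$ to $Z(\G)$ is positive definite.

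Next I would exploit the hypothesis that $\G$ is 2-step nilpotent, which means precisely that $[\G,\G]\subseteq Z(\G)$. Because $[\G,\G]$ sits inside the positive definite subspace $Z(\G)$, the restriction of $\prs$ to $[\G,\G]$ is itself positive definite; in particular $[\G,\G]\cap[\G,\G]^\perp=\{0\}$, so $[\G,\G]$ is nondegenerate Euclidean. Since $\G$ is non abelian, $[\G,\G]\neq\{0\}$, so this is a genuine (nonzero) Euclidean subspace.

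Finally, I would invoke Proposition \ref{pr1}: for an Einstein Lorentzian nilpotent non abelian Lie algebra, a nondegenerate derived ideal is necessarily Lorentzian, hence must contain a negative direction. This directly contradicts the positive definiteness of $\prs$ on $[\G,\G]$ established in the previous step. The contradiction forces $Z(\G)$ to be degenerate, which is the assertion.

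The argument is essentially a short deduction once Corollary \ref{co} and Proposition \ref{pr1} are available, so there is no serious computational obstacle. The only point requiring a little care is the elementary fact that a subspace of a Euclidean space is Euclidean: one checks that if $u\in[\G,\G]\cap[\G,\G]^\perp$ then $\langle u,u\rangle=0$, whence positive definiteness of $\prs$ on $[\G,\G]$ yields $u=0$. The conceptual heart is simply recognizing that the 2-step condition $[\G,\G]\subseteq Z(\G)$ lets the Euclidean nature of the center (from Corollary \ref{co}) collide with the forced Lorentzian nature of a nondegenerate derived ideal (from Proposition \ref{pr1}).
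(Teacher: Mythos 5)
Your proposal is correct and is essentially identical to the paper's own proof: both argue by contradiction, using Corollary \ref{co} to make $Z(\G)$ Euclidean, the 2-step condition $[\G,\G]\subseteq Z(\G)$ to force $[\G,\G]$ to be nondegenerate Euclidean, and Proposition \ref{pr1} to obtain the contradiction. The extra verification you include (that a subspace of a positive definite subspace is nondegenerate Euclidean) is a harmless elaboration of a step the paper leaves implicit.
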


\begin{proof} Suppose that $Z(\G)$ is nondegenerate. According to Corollary \ref{co}, $Z(\G)$ is nondegenerate Euclidean. But $\G$ is 2-step nilpotent and hence $[\G,\G]\subset Z(\G)$. Thus $[\G,\G]$ is nondegenerate Euclidean which contradicts Proposition \ref{pr1}.
	\end{proof}

\begin{pr}\label{pr3} Let $(\G,\prs)$ be an Einstein Lorentzian nilpotent Lie algebra such that $[\G,\G]$ is degenerate then $[\G,\G]\cap[\G,\G]^\perp\subset Z(\G)$ and $(\G,\prs)$ is Ricci flat.
	
\end{pr}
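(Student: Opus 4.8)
The plan is to exploit that, in a Lorentzian space, $[\G,\G]$ being degenerate forces $[\G,\G]\cap[\G,\G]^\perp$ to be a single isotropic line $\R e$ (case~3 of the trichotomy), and then to read off $\mathcal{J}_1$ from a basis adapted to that line. Set $d=\dim[\G,\G]$. First I would fix an isotropic $\bar e$ with $\langle e,\bar e\rangle=1$ and an orthonormal basis $(f_1,\dots,f_{n-2})$ of the Euclidean space $\{e,\bar e\}^\perp$, chosen so that $(f_1,\dots,f_{d-1})$ is a basis of $[\G,\G]\cap\{e,\bar e\}^\perp$; since $e\in[\G,\G]$ one checks $[\G,\G]=\mathrm{span}(e,f_1,\dots,f_{d-1})$. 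Working in $\mathcal B=(e,\bar e,f_1,\dots,f_{n-2})$, the coefficient of $\bar e$ (resp. of $f_k$ for $k\ge d$) in a bracket is $\langle[u,v],e\rangle$ (resp. $\langle[u,v],f_k\rangle$), which vanishes because $[\G,\G]\perp e$ and $[\G,\G]\perp f_k$. Hence the only nonzero structure endomorphisms are the one attached to $e$, call it $S_0$, and those attached to $f_1,\dots,f_{d-1}$, call them $T_1,\dots,T_{d-1}$. Since the Gram matrix of $\mathcal B$ annihilates every product involving $e$, formula \eqref{invariant} collapses to $\mathcal{J}_1=-\sum_k T_k^2$, while $\mathcal{J}_2 e=0$ because $e\in[\G,\G]^\perp\subset\ker\mathcal{J}_2$ by \eqref{re}.

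Next I would feed this into the Einstein condition. From \eqref{ricci1} and $\mathcal{J}_2 e=0$ we get $\mathcal{J}_1 e=-2\la e$, and pairing with the isotropic $e$ gives $\sum_k\langle T_ke,T_ke\rangle=0$; Lemma \ref{le1} then forces $T_ke=\beta_k e$ with $2\la=\sum_k\beta_k^2\ge 0$. For the reverse inequality I would compute $\tr(T_k^2)$ in $\mathcal B$: using $T_ke=\beta_k e$, the fact (noted in the proof of Lemma \ref{le}) that $\{e\}^\perp$ is positive semidefinite with radical $\R e$, and $T_k(\{e\}^\perp)\subset\{e\}^\perp$, one finds $\tr(T_k^2)=2\beta_k^2-\sum_j\langle T_kf_j,T_kf_j\rangle\le 2\beta_k^2$. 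Since $\tr\mathcal{J}_1=\tr\mathcal{J}_2$ gives $\tr\mathcal{J}_1=-4n\la$ with $n=\dim\G$, the bound $\tr\mathcal{J}_1=-\sum_k\tr(T_k^2)\ge-4\la$ yields $(n-1)\la\le 0$, hence $\la\le 0$. Combining, $\la=0$, which is Ricci flatness, and every $\beta_k=0$, i.e. $T_ke=0$.

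Finally, for $[\G,\G]\cap[\G,\G]^\perp\subset Z(\G)$ it remains to prove $S_0e=0$. With $T_ke=0$ the bracket reads $\ad_e(v)=\langle S_0e,v\rangle e$, so $\mathrm{Im}\,\ad_e\subset\R e$; writing $\phi=\langle S_0e,\cdot\rangle$, for every $x\in\G$ we get $\ad_x(e)=-\phi(x)e$, so $e$ is an eigenvector of $\ad_x$ with eigenvalue $-\phi(x)$. As $\G$ is nilpotent each $\ad_x$ is nilpotent, so this eigenvalue must be $0$, whence $\phi\equiv 0$ and $S_0e=0$. Then $e\in\ker S_0\cap\bigcap_k\ker T_k=Z(\G)$, which completes the argument.

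The main obstacle is the asymmetry with the nondegenerate case of Proposition \ref{pr1}: here only $e$, and not a full hyperbolic pair, lies in $[\G,\G]^\perp$, so one cannot bound $\la$ symmetrically from both sides. The delicate points are the sign bookkeeping in $\tr(T_k^2)$ (the two null directions contribute $+2\beta_k^2$ while the Euclidean block contributes a nonpositive term), and the observation that $S_0e=0$ is not a metric fact but comes from nilpotency through the eigenvalue argument above.
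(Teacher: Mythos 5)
Your proposal is correct and takes essentially the same approach as the paper's proof: a basis adapted to the isotropic line $\R e=[\G,\G]\cap[\G,\G]^\perp$ reducing $\mathcal{J}_1$ to $-\sum_k T_k^2$, Lemma \ref{le1} to get $T_ke=\beta_k e$ and $\la\geq 0$, the identity $\tr(\mathcal{J}_1)=\tr(\mathcal{J}_2)$ together with the semidefiniteness of $e^\perp$ to get $\la\leq 0$, and the nilpotency eigenvalue argument to conclude $S_0e=0$ and $e\in Z(\G)$. The only difference is notational bookkeeping: the paper explicitly splits $\{e,\bar e\}^\perp$ into a part inside $[\G,\G]$ and a part inside $[\G,\G]^\perp$, which you achieve implicitly by observing that the structure endomorphisms attached to $\bar e$ and to the complementary $f_k$ vanish.
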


\begin{proof} Let $e$ be a generator of $[\G,\G]\cap[\G,\G]^\perp$. Then there exists a basis
	 $(e,\bar{e},f_1,\ldots,f_{d},g_1,\ldots,g_s)$  of $\G$ such that $(e,f_1,\ldots,f_{d})$ is a basis of $[\G,\G]$, $(e,g_1,\ldots,g_s)$ is  basis of $[\G,\G]^\perp$, $\bar{e}$ is isotropic, $\langle e,\bar{e}\rangle=1$ and $(f_1,\ldots,f_{d},g_1,\ldots,g_s)$ is an orthonormal basis of $\{ e,\bar{e}\}^\perp$.
	 Denote by $(A,S_1,\ldots,S_d)$ the associated structure endomorphisms, i.e., for any $u,v\in\G$,
	\[ [u,v]=\langle Au,v\rangle e+\sum_{i=1}^d\langle S_iu,v\rangle f_i. \]
	According to \eqref{ricci1} and  \eqref{invariant}, we have
	\[ -\frac12\mathcal{J}_1+\frac14\mathcal{J}_2=\la\mathrm{Id}_{\G}\esp \mathcal{J}_1=-\sum_{i=1}^dS_i^2. \] Since $e\in[\G,\G]^\perp$ and it is isotropic, we have $\mathcal{J}_2e=0$, 
	$-\frac12\mathcal{J}_1e=\la e,$ and hence
	$\sum_{j=1}^d\langle S_je,S_je\rangle=0.$
	By using Lemma \ref{le1}, we get $S_je=a_j e$ for any $j=1,\ldots,d$ and hence
	$\la=\frac12\sum_{i=1}^da_i^2.$ On the other hand, since $\tr(\mathcal{J}_1)=\tr(\mathcal{J}_2)$, we get
	\[ (\dim\G) \la=-\frac14\tr(\mathcal{J}_1)=\frac14\sum_{j=1}^d\tr(S_j^2). \]
	On the other hand
	\begin{eqnarray*}
		\tr(S_j^2)&=&\langle S_j^2e,\bar{e}\rangle+\langle S_j^2\bar{e},{e}\rangle+
		\sum_{l}\langle S_j^2f_l,f_l\rangle+\sum_{l}\langle S_j^2g_l,g_l\rangle\\
		&=&2a_j^2-\sum_{l}\langle S_jf_l,S_jf_l\rangle-\sum_{l}\langle S_jg_l,S_jg_l\rangle.
	\end{eqnarray*}Since $S_j$ leaves invariant $e$, it leaves invariant its orthogonal $\mathrm{span}\{e,f_l,g_k\}$. But the restriction of $\prs$ to $\mathrm{span}\{e,f_l,g_k\}$ is nonnegative. So
	 $\langle S_jf_l,S_jf_l\rangle\geq0$ and $\langle S_jg_l,S_jg_l\rangle\geq0$. Thus
	\[ (\dim\G-1)\la=-\sum_{l,j}\langle S_jf_l,S_jf_l\rangle-\sum_{l,j}
	\langle S_jg_l,S_jg_l\rangle\leq0. \]But we have shown so far that $\la\geq0$. In conclusion,  $\la=0$ and 
	$S_j(e)=0$ for $j=1,\ldots,p$.  This implies that, for any $u\in\G$,
	$[e,u]=\langle A(e),u\rangle e.$ But $\ad_u$ is nilpotent an hence $[e,u]=0$ which completes the proof.
\end{proof}

\begin{co}
Let $\G$ be a nilpotent Lorentzian Einstein Lie algebra. Suppose that $[\G,\G]$ is degenerate, then $\mathrm{Z}(\G)$ is also degenerate.
\end{co}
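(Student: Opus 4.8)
The plan is to derive this directly from Proposition \ref{pr3} together with Corollary \ref{co}, arguing by contradiction. First I would observe that since $[\G,\G]$ is degenerate it is in particular nonzero, so $\G$ is non-abelian and the hypotheses of Corollary \ref{co} are met. This little check is the only place the non-abelian assumption needs attention, and it comes for free: by the definition recalled before Lemma \ref{le1}, degeneracy of $[\G,\G]$ means $[\G,\G]\cap[\G,\G]^\perp\neq\{0\}$, which forces $[\G,\G]\neq\{0\}$.

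Next I would extract an explicit isotropic vector lying in the center. Because $\prs$ is Lorentzian and $[\G,\G]$ is degenerate, the trichotomy for subspaces of a Lorentzian space recalled before Lemma \ref{le1} gives $\dim\left([\G,\G]\cap[\G,\G]^\perp\right)=1$; let $e$ span this line. Any vector of $F\cap F^\perp$ is isotropic, since it lies simultaneously in $F$ and in $F^\perp$, so $e$ is a nonzero isotropic vector. Proposition \ref{pr3} then tells me that $[\G,\G]\cap[\G,\G]^\perp\subset Z(\G)$, whence $e\in Z(\G)$.

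The contradiction step is then immediate. Suppose $Z(\G)$ were nondegenerate. By Corollary \ref{co}, $Z(\G)$ would be Euclidean, and a Euclidean subspace contains no nonzero isotropic vector. This contradicts the existence of the nonzero isotropic vector $e\in Z(\G)$ produced above. Therefore $Z(\G)$ must be degenerate.

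I expect no genuine obstacle here: the statement is really a corollary, and the whole argument rests on a single observation, namely that the degeneracy line of $[\G,\G]$ supplies an isotropic vector which Proposition \ref{pr3} forces into the center, while Corollary \ref{co} forbids such a vector when $Z(\G)$ is nondegenerate. The entire difficulty has already been absorbed into Propositions \ref{pr1}--\ref{pr3} and their corollary; the present step is merely an assembly of those facts.
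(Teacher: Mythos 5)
Your proof is correct and is exactly the argument the paper intends: the corollary is stated there without proof precisely because it is this immediate assembly of Proposition \ref{pr3} (the degeneracy line $[\G,\G]\cap[\G,\G]^\perp$ is a central isotropic direction) with Corollary \ref{co} (a nondegenerate center would be Euclidean, hence free of nonzero isotropic vectors). Your extra check that degeneracy of $[\G,\G]$ forces $\G$ to be non-abelian, so that Corollary \ref{co} applies, is a welcome bit of care the paper leaves tacit.
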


\begin{pr}\label{pr6} Let $(\G,\prs)$ be a Ricci flat Lorentzian nilpotent non abelian Lie algebra such that $\dim[\G,\G]=\dim\left( Z(\G)\cap[\G,\G]  \right)+1$. Then $Z(\G)$ is degenerate.
	
\end{pr}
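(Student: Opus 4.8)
The plan is to argue by contradiction: I assume $Z(\G)$ is nondegenerate and show that the timelike direction that the dimension hypothesis forces into $[\G,\G]$ is actually central, which is absurd. First I fix the ambient structure. Since $Z(\G)$ is nondegenerate, Corollary \ref{co} makes it Euclidean. I then claim $[\G,\G]$ is nondegenerate: if it were degenerate, Proposition \ref{pr3} would put the isotropic generator of $[\G,\G]\cap[\G,\G]^\perp$ inside $Z(\G)$, which is impossible because a Euclidean subspace contains no nonzero isotropic vector. Hence, by Proposition \ref{pr1}, $[\G,\G]$ is Lorentzian. Writing $W:=Z(\G)\cap[\G,\G]$ and $d:=\dim[\G,\G]$, the hypothesis $\dim[\G,\G]=\dim W+1$ together with $W\subset Z(\G)$ Euclidean shows $W$ is a nondegenerate Euclidean hyperplane of the Lorentzian space $[\G,\G]$; its orthogonal inside $[\G,\G]$ is thus a timelike line $\R e_d$ with $e_d\perp W$ and $\langle e_d,e_d\rangle=-1$.

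Next I would set up structure endomorphisms in an adapted pseudo-orthonormal basis $(e_1,\ldots,e_{d-1},e_d,f_1,\ldots,f_m)$, where $(e_1,\ldots,e_{d-1})$ is an orthonormal basis of $W$, $e_d$ is as above, and $(f_1,\ldots,f_m)$ is an orthonormal basis of the Euclidean space $[\G,\G]^\perp$. As the bracket lands in $[\G,\G]$, only $S_1,\ldots,S_d$ are nonzero, and $\mathcal{J}_1=-\sum_{i=1}^{d-1}S_i^2+S_d^2$ by \eqref{invariant}. Since $e_1,\ldots,e_{d-1}\in Z(\G)=\cap_i\ker S_i$, each $S_i$ kills $W$, so $S_ie_d\perp W$ and $S_ie_d\perp e_d$; therefore $S_ie_d\in[\G,\G]^\perp$, giving $\langle S_ie_d,S_ie_d\rangle\geq0$ for all $i$. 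I record the nonnegative scalars $A:=\sum_{i=1}^{d-1}\langle S_ie_d,S_ie_d\rangle$ and $D:=\langle S_de_d,S_de_d\rangle$.

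Two inputs then close the argument. The first is nilpotency: since $W$ is central, the lower central series beyond $[\G,\G]$ is $[\G,[\G,\G]]=\{[x,e_d]:x\in\G\}$, and if this space contained a vector with nonzero $e_d$-component the series would stabilize at a nonzero subspace, contradicting nilpotency; hence $\{[x,e_d]:x\in\G\}\subset W$, which is exactly $S_de_d=0$, i.e. $D=0$. The second is the Einstein condition $\Ri=0$, i.e. $\mathcal{J}_2=2\mathcal{J}_1$ by \eqref{ricci1}. Evaluating this on $[\G,\G]^\perp\subset\ker\mathcal{J}_2$ (see \eqref{re}) forces $\mathcal{J}_1f_l=0$, hence $\langle\mathcal{J}_1f_l,f_l\rangle=0$, while the trace identity gives $\tr\mathcal{J}_1=0$. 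After expressing $\tr(S_i^2)$, $\tr(S_d^2)$ and $\langle\mathcal{J}_1f_l,f_l\rangle$ through the Euclidean norms of $S_ie_d$, $S_if_l$, $S_df_l$ (using that the $W$-components of all $S_if_l$ vanish and Parseval in $[\G,\G]^\perp$), these two scalar identities combine to the single relation $A=D$. With $D=0$ this yields $A=0$, so $S_ie_d=0$ for every $i$, whence $e_d\in\cap_i\ker S_i=Z(\G)$; then $e_d\in W$, contradicting $e_d\perp W$ and $\langle e_d,e_d\rangle=-1$.

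I expect the delicate point to be the extraction of the clean relation $A=D$. The diagonal equation from $\langle\mathcal{J}_1e_d,e_d\rangle$ alone mixes $A$, $D$ and the norms of the $S_df_l$ with indefinite signs and is inconclusive; the cancellation that isolates $A=D$ comes precisely from pairing $\tr\mathcal{J}_1=0$ against the vanishing $\langle\mathcal{J}_1f_l,f_l\rangle=0$ on the Euclidean complement $[\G,\G]^\perp$, which removes the ambiguous timelike contributions. It is worth stressing that the purely metric computation only yields $A=D$, and it is the nilpotency input $D=0$ that collapses it into a contradiction; this is consistent with the expectation that relaxing nilpotency should permit genuine examples.
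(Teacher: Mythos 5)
Your proof is correct and follows essentially the same route as the paper: contradiction via Corollary \ref{co}, Propositions \ref{pr1} and \ref{pr3} to get a Euclidean center and Lorentzian derived ideal, an adapted basis with timelike $e_d$ completing the central hyperplane $W$ of $[\G,\G]$, nilpotency of $\ad_x$ to kill the $e_d$-component of $[x,e_d]$ (your $D=0$, the paper's $\al_r=0$), and Euclidean positivity to force $e_d\in Z(\G)$. Your relation $A=D$, extracted from $\tr\mathcal{J}_1=0$ together with $\mathcal{J}_1|_{[\G,\G]^\perp}=0$, is exactly the paper's identity $0=\tr(\ad_{e_r}\circ\ad_{e_r}^*)=\sum_i\langle \ad_{f_i}(e_r),\ad_{f_i}(e_r)\rangle$ in different packaging, so the two arguments coincide in substance.
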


\begin{proof} Suppose that $Z(\G)$ is nondegenerate. Then, according to Proposition \ref{pr1}, Proposition \ref{pr3} and Corollary \ref{co}, $Z(\G)$ is Euclidean and $[\G,\G]$ is Lorentzian and hence there exists an orthonormal basis $(e_1,\ldots,e_r)$ of $[\G,\G]$ such that $e_i\in Z(\G)$ for $i=1,\ldots,r-1$ and $\langle e_r,e_r\rangle=-1$. We denote by $(S_1,\ldots,S_r)$ the structure endomorphisms associated to  $(e_1,\ldots,e_r)$. We have
	\[ -\frac12\mathcal{J}_1+\frac14\mathcal{J}_2=0,\;\mathcal{J}_1=S_r^2-\sum_{j=1}^{r-1}S_i^2\esp \mathcal{J}_2(u)=
	-\sum_{i,j}\langle e_i,u\rangle\tr(S_i\circ S_j)e_j. \]
	Since $Z(\G)\subset \ker\mathcal{J}_1$, we get $\mathcal{J}_2(e_i)=0$ for $i=1,\ldots,r-1$. This is equivalent to $\tr(S_i\circ S_j)=0$ for $i=1,\ldots,r$ and $j=1\ldots,r-1$ and hence
	\[ \mathcal{J}_2(u)=\langle e_r,u\rangle\tr(S_r^2)e_r. \] 
	But $\tr(\mathcal{J}_1)=\tr(\mathcal{J}_2)=0$ so $\mathcal{J}_1=\mathcal{J}_2=0$. This implies, by virtue of \eqref{12}, that 
	$\tr(\ad_x\circ\ad_y^*)=0$ for any $x,y\in\G$. For $x\in\G$, put
	\[ \ad_x(e_r)=\al_1e_1+\ldots+\al_r e_r. \]So  $\ad_x^2(e_r)=\al_r^2e_r$ and since $\ad$ is nilpotent then $\al_r=0$ and hence for any $x\in\G$, $\ad_x(e_r)\in Z(\G)$. If $(f_1,\ldots,f_q)$ is an orthonormal basis of $[\G,\G]^\perp$, then
	\begin{eqnarray*}
		0&=&\tr(\ad_{e_r}\circ\ad_{e_r}^*)\\
		&=&\sum_{i=1}^{r-1}\langle \ad_{e_r}(e_i),\ad_{e_r}(e_i)\rangle+\sum_{i=1}^q\langle \ad_{e_r}(f_i),\ad_{e_r}(f_i)\rangle\\
		&=&\sum_{i=1}^q\langle \ad_{f_i}(e_r),\ad_{f_i}(e_r)\rangle.
	\end{eqnarray*}But $\ad_{f_i}(e_r)\in Z(\G)$ and $Z(\G)$ is Euclidean thus $\ad_{f_i}(e_r)=0$ for $i=1,\ldots,q$ an hence $e_r\in Z(\G)$ which is a contradiction. This completes the proof.
\end{proof}

By using our approach, we recover some results obtained in \cite{Dconti1}.

\begin{pr} Let $(\G,\br,\prs)$ be a nilpotent pseudo-Euclidean Lie algebra. Then
	\begin{enumerate}	\item If $(\G,\br,\prs)$ is Einstein with $\la\not=0$ then $Z(\G)\subset[\G,\G]$.
		\item  If $\dim Z(\G)\geq\dim [\G,\G]$ then $(\G,\br,\prs)$ is Einstein if and only if it is Ricci flat.
	\end{enumerate}In particular, if $\G$ is 2-nilpotent then $(\G,\br,\prs)$ is Einstein if and only if it is Ricci flat.
\end{pr}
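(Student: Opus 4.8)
The plan is to read everything off the nilpotent Ricci formula \eqref{ricci1}, namely $\Ri=-\frac12\mathcal{J}_1+\frac14\mathcal{J}_2$, together with the inclusions \eqref{re}, \eqref{re1} and the trace identity $\tr\mathcal{J}_1=\tr\mathcal{J}_2$. For the first assertion I would take $z\in Z(\G)$. Since $Z(\G)\subset\ker\mathcal{J}_1$ by \eqref{re}, the Einstein equation $\Ri(z)=\la z$ collapses to $\frac14\mathcal{J}_2(z)=\la z$; as $\la\neq0$ this writes $z=\frac{1}{4\la}\mathcal{J}_2(z)\in\mathrm{Im}\,\mathcal{J}_2\subset[\G,\G]$, the last inclusion being \eqref{re1}. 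Hence $Z(\G)\subset[\G,\G]$.

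For the second assertion, only the implication Einstein $\Rightarrow$ Ricci-flat needs an argument, and I would prove it by contradiction. Assume $\G$ is Einstein with $\la\neq0$ and $\dim Z(\G)\geq\dim[\G,\G]$. By the first assertion $Z(\G)\subset[\G,\G]$, so the dimension hypothesis forces $Z(\G)=[\G,\G]$. I would first observe that $Z(\G)$ is then nondegenerate: the computation above shows $\mathcal{J}_2$ acts as $4\la\,\mathrm{Id}$ on $Z(\G)$, hence injectively, while $Z(\G)^\perp=[\G,\G]^\perp\subset\ker\mathcal{J}_2$ by \eqref{re}, so any vector in $Z(\G)\cap Z(\G)^\perp$ is killed by $\mathcal{J}_2$ and is therefore zero.

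With the orthogonal splitting $\G=Z(\G)\oplus Z(\G)^\perp$ in hand, I would evaluate the Einstein equation on each factor: on $Z(\G)$ one has $\mathcal{J}_1=0$ and $\mathcal{J}_2=4\la\,\mathrm{Id}$, whereas on $Z(\G)^\perp=[\G,\G]^\perp$ one has $\mathcal{J}_2=0$ and $\mathcal{J}_1=-2\la\,\mathrm{Id}$. Consequently $\tr\mathcal{J}_2=4\la\dim Z(\G)$ and $\tr\mathcal{J}_1=-2\la\dim Z(\G)^\perp$, and the identity $\tr\mathcal{J}_1=\tr\mathcal{J}_2$ together with $\la\neq0$ gives $2\dim Z(\G)=-\dim Z(\G)^\perp$, which is absurd since $\dim Z(\G)+\dim Z(\G)^\perp=\dim\G$. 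This contradiction yields $\la=0$. The \emph{in particular} is then immediate: if $\G$ is $2$-step nilpotent then $[\G,\G]\subset Z(\G)$, so $\dim Z(\G)\geq\dim[\G,\G]$ and the claim is exactly the second assertion.

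The step I expect to be the crux is recognizing that, once $Z(\G)=[\G,\G]$, both $\mathcal{J}_1$ and $\mathcal{J}_2$ reduce to scalar multiples of the two complementary orthogonal projections attached to $\G=Z(\G)\oplus Z(\G)^\perp$; their traces then carry opposite signs, so the invariant identity $\tr\mathcal{J}_1=\tr\mathcal{J}_2$ can hold only when $\la=0$. The one point demanding care is the nondegeneracy of $Z(\G)$, without which the orthogonal splitting, and hence the clean trace computation, would break down.
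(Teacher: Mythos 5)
Your proof is correct, and both parts rest on the same three ingredients as the paper's: formula \eqref{ricci1}, the inclusions \eqref{re}--\eqref{re1}, and the identity $\tr(\mathcal{J}_1)=\tr(\mathcal{J}_2)$. Part 1 coincides with the paper's argument. For part 2, however, you take a genuinely different route to the decomposition on which the trace contradiction is run. The paper works abstractly with $M=\ker\mathcal{J}_1$ and $N=\ker\mathcal{J}_2$: the Einstein equation with $\la\neq0$ gives $M\cap N=\{0\}$, and the dimension count $\dim M+\dim N\geq\dim Z(\G)+\dim[\G,\G]^\perp\geq\dim[\G,\G]+\dim[\G,\G]^\perp=\dim\G$ (the middle inequality is exactly the hypothesis $\dim Z(\G)\geq\dim[\G,\G]$) forces $\G=M\oplus N$. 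This direct sum need not be orthogonal, but that is harmless: since $\mathcal{J}_2=4\la\,\mathrm{Id}$ on $M$ and $\mathcal{J}_1=-2\la\,\mathrm{Id}$ on $N$, both subspaces are invariant under both operators, and the trace computed in any adapted basis gives $\tr(\mathcal{J}_2)=4\la\dim M$ and $\tr(\mathcal{J}_1)=-2\la\dim N$, whence $\la=0$. You instead pin the splitting down concretely: you first upgrade the hypothesis to the equality $Z(\G)=[\G,\G]$ via part 1, then prove $Z(\G)$ is nondegenerate, and use the orthogonal splitting $\G=Z(\G)\oplus Z(\G)^\perp$; in your situation one in fact has $M=Z(\G)$ and $N=Z(\G)^\perp$, so the endgame is identical. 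What each approach buys: the paper's kernel count entirely sidesteps the nondegeneracy issue you rightly flag as the delicate point (orthogonality of the complement is never needed for a trace argument — any complement does), while your version extracts along the way the extra structural facts that $Z(\G)=[\G,\G]$ and that this common subspace is nondegenerate, which the paper's proof never establishes. Your nondegeneracy argument itself is sound: $\mathcal{J}_2$ is injective on $Z(\G)$ (acting as $4\la\,\mathrm{Id}$) and kills $Z(\G)^\perp=[\G,\G]^\perp$ by \eqref{re}, so $Z(\G)\cap Z(\G)^\perp=\{0\}$.
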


\begin{proof} Suppose that $(\G,\br,\prs)$ is nilpotent and Einstein with $\la\not=0$, i.e.,
	\[ -\frac12\mathcal{J}_1+\frac14\mathcal{J}_2=\la\mathrm{Id}_\G. \]
	This implies, by virtue of \eqref{re} and \eqref{re1},
	\[ Z(\G)\subset \mathrm{Im}\mathcal{J}_2\subset [\G,\G]. \]
	It implies also $M\cap N=\{0\}$. But, if $\dim Z(\G)\geq\dim[\G,\G]$ then
	\[ \dim M+\dim N\geq \dim Z(\G)+\dim[\G,\G]^\perp\geq\dim\G\]and hence
	$\G=M\oplus N$. This contradict $\tr(\mathcal{J}_1)=tr(\mathcal{J}_2)$.
	\end{proof}

One of the main results in \cite{Dconti1} is that if a pseudo-Euclidean Einstein nilpotent Lie algebra has a derivation with a non vanishing trace then it is Ricci flat. We give another proof of this fact based on \eqref{formula}. This formula  was established in the Euclidean  context in \cite{lauret} by using the Ricci tensor as a moment map. We prove this formula in the general case by a direct computation.

\begin{pr}\label{pr4} Let $(\G,\prs)$ be a pseudo-Euclidean Lie algebra and let $Q$ denote the symmetric endomorphism $Q=-\frac12\mathcal{J}_1+\frac14\mathcal{J}_2$. Then for any orthonormal basis $(e_1,\ldots,e_p)$ of $\G$ and any endomorphism $E$ of $\G$, we have
	\begin{equation}\label{formula} \tr(QE)=\frac14\sum_{i,j}\e_i\e_j\langle E([e_i,e_j])-[E(e_i),e_j]-[e_i,E(e_j)],[e_i,e_j]\rangle, \end{equation}where $\langle e_i,e_i\rangle=\e_i$.
	
\end{pr}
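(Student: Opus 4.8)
The plan is to reduce both sides to contractions of the structure endomorphisms of a fixed orthonormal basis and match them term by term. Writing $Q=-\frac12\mathcal{J}_1+\frac14\mathcal{J}_2$ as in \eqref{ricci1}, the left-hand side splits as $\tr(QE)=-\frac12\tr(\mathcal{J}_1E)+\frac14\tr(\mathcal{J}_2E)$, so it suffices to identify each of these two traces with a piece of the right-hand side of \eqref{formula}. First I would expand that right-hand side by linearity into three sums and exploit the skew-symmetry of the bracket together with the symmetry of the weights $\e_i\e_j$: replacing $(i,j)$ by $(j,i)$ in the last term and using $[e_j,Ee_i]=-[Ee_i,e_j]$, $[e_j,e_i]=-[e_i,e_j]$ shows the two correction terms coincide after summation, so that
\begin{equation*} \frac14\sum_{i,j}\e_i\e_j\langle E([e_i,e_j])-[E(e_i),e_j]-[e_i,E(e_j)],[e_i,e_j]\rangle=\frac14\sum_{i,j}\e_i\e_j\langle E[e_i,e_j],[e_i,e_j]\rangle-\frac12\sum_{i,j}\e_i\e_j\langle[E(e_i),e_j],[e_i,e_j]\rangle. \end{equation*}
Thus the claim is equivalent to the two identities $\tr(\mathcal{J}_2E)=\sum_{i,j}\e_i\e_j\langle E[e_i,e_j],[e_i,e_j]\rangle$ and $\tr(\mathcal{J}_1E)=\sum_{i,j}\e_i\e_j\langle[E(e_i),e_j],[e_i,e_j]\rangle$.

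To prove these I would work with the structure endomorphisms $(S_1,\ldots,S_p)$ associated to the \emph{orthonormal} basis $(e_1,\ldots,e_p)$, for which \eqref{bracket} reads $[e_i,e_j]=\sum_k\langle S_ke_i,e_j\rangle e_k$ and \eqref{invariant} specializes to $\mathcal{J}_1=-\sum_i\e_iS_i^2$ and $\mathcal{J}_2u=-\sum_{i,j}\langle e_i,u\rangle\tr(S_i\circ S_j)e_j$. The one computational tool needed is the orthonormal expansion $\langle a,b\rangle=\sum_j\e_j\langle a,e_j\rangle\langle b,e_j\rangle$, which after a double contraction yields the workhorse identity
\begin{equation*} \sum_{i,j}\e_i\e_j\langle S_me_i,e_j\rangle\langle S_ne_i,e_j\rangle=\sum_i\e_i\langle S_me_i,S_ne_i\rangle=-\tr(S_m\circ S_n), \end{equation*}
where the last step uses that each $S_i$ is skew-symmetric.

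With this in hand the matching is direct. Expanding $E[e_i,e_j]$ and $[e_i,e_j]$ in the basis through the $S_k$ and applying the workhorse identity gives $\sum_{i,j}\e_i\e_j\langle E[e_i,e_j],[e_i,e_j]\rangle=-\sum_{m,n}\langle E(e_m),e_n\rangle\tr(S_m\circ S_n)$, which is precisely $\tr(\mathcal{J}_2E)$ read off from the specialized form of $\mathcal{J}_2$ above. Similarly, writing $[E(e_i),e_j]=\sum_l\langle S_lE(e_i),e_j\rangle e_l$ and contracting the inner index gives $\sum_{i,j}\e_i\e_j\langle[E(e_i),e_j],[e_i,e_j]\rangle=\sum_{i,l}\e_i\e_l\langle S_lE(e_i),S_le_i\rangle=\tr(\mathcal{J}_1E)$, using $\mathcal{J}_1=-\sum_l\e_lS_l^2$ and $S_l^*=-S_l$. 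Substituting the two identities into the reduction above yields \eqref{formula}.

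I expect the only genuine obstacle to be bookkeeping: keeping the weights $\e_i\e_j$, the coefficients $\frac14$ and $\frac12$, and the sign produced by skew-symmetry of the $S_i$ straight, and applying the symmetrization of the two correction terms \emph{before} identifying the traces. No geometric input beyond \eqref{invariant} enters; the argument is purely multilinear algebra in the fixed orthonormal basis.
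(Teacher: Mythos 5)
Your proposal is correct and takes essentially the same approach as the paper: both are direct orthonormal-basis computations resting on \eqref{invariant}, the skew-symmetry of the structure endomorphisms (your $S_i$ being $\e_i J_{e_i}$ in the paper's notation), and the same $(i,j)\leftrightarrow(j,i)$ symmetrization identifying the two bracket-correction terms. The only difference is organizational: the paper runs a single chain of trace manipulations for $\tr(QE)$ and symmetrizes at the very end, whereas you symmetrize first and then verify the identities for $\tr(\mathcal{J}_1E)$ and $\tr(\mathcal{J}_2E)$ separately, which changes nothing of substance.
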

\begin{proof} We denote by $(S_1,\ldots,S_p)$ the structures endomorphisms associated to $(e_1,\ldots,e_p)$. From \eqref{J}, we get $S_i=\e_i J_{e_i}$ and by using
	 \eqref{invariant} we get
	\[ QE(u)=\frac12\sum_{i=1}^p\e_i J_{e_i}^2E(u)-\frac14\sum_{i,j=1}^p\e_i\e_j\langle e_i,E(u)\rangle{\tr}(J_{e_i}\circ
	J_{e_j})e_j. \]Let us compute:
	\begin{eqnarray*}
		\tr(QE)&=&\sum_{j=1}^p\e_j\langle QE(e_j),e_j\rangle\\
		&=&-\frac12\sum_{i,j=1}^p\e_i\e_j\langle J_{e_i}E(e_j),J_{e_i}(e_j)\rangle -\frac14\sum_{i,j=1}^p\e_i\e_j\langle e_i,E(e_j)\rangle{\tr}(J_{e_i}\circ
		J_{e_j})\\
		&=&-\frac12\sum_{i,j=1}^p\e_j\e_i\langle e_i,[E(e_j),J_{e_i}(e_j)]\rangle+\frac14\sum_{i,j,l=1}^p\e_i\e_l\e_j\langle e_i,E(e_j)\rangle\langle J_{e_j}e_l,J_{e_i}e_l\rangle\\
		&=&-\frac12\sum_{i,j,l=1}^p\e_j\e_i\e_l\langle J_{e_i}(e_j),e_l\rangle \langle e_i,[E(e_j),e_l]\rangle+\frac14\sum_{j,l=1}^p\e_l\e_j\langle J_{e_j}e_l,J_{E(e_j)}e_l\rangle\\
		&=&-\frac12\sum_{i,j,l=1}^p\e_j\e_i\e_l\langle {e_i},[e_j,e_l]\rangle \langle e_i,[E(e_j),e_l]\rangle+\frac14\sum_{i,j,l=1}^p\e_l\e_j\e_i\langle J_{e_j}e_l,e_i\rangle \langle e_i,J_{E(e_j)}e_l\rangle\\
		&=&-\frac12\sum_{j,l=1}^p\e_j\e_l \langle [e_j,e_l],[E(e_j),e_l]\rangle+\frac14\sum_{i,j,l=1}^p\e_l\e_j\e_i\langle {e_j},[e_l,e_i]\rangle \langle [e_l,e_i],{E(e_j)}\rangle\\
		&=&-\frac12\sum_{j,l=1}^p\e_j\e_l \langle [e_j,e_l],[E(e_j),e_l]\rangle+\frac14\sum_{i,l=1}^p\e_l\e_i \langle [e_l,e_i],{E([e_l,e_i])}\rangle\\
		&=&	-\frac14\sum_{j,l=1}^p\e_j\e_l \langle [e_j,e_l],[E(e_j),e_l]\rangle-\frac14\sum_{j,l=1}^p\e_j\e_l \langle [e_j,e_l],[e_j,E(e_l)]\rangle+\frac14\sum_{i,l=1}^p\e_l\e_i \langle [e_l,e_i],{E([e_l,e_i])}\rangle,
	\end{eqnarray*}and the formula follows.
\end{proof}

From Proposition \ref{pr4} we get:

\begin{pr}(\cite[Theorem 4.1]{Dconti1})\label{pr5} Let $(\G,\prs)$ be a pseudo-Euclidean  nilpotent Lie algebra having a derivation with non zero trace. Then $(\G,\prs)$ is Einstein if and only if it is Ricci flat.
\end{pr}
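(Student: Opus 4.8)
The plan is to apply the trace formula \eqref{formula} of Proposition \ref{pr4} with the endomorphism $E$ chosen to be the derivation itself, exploiting the fact that, since $\G$ is nilpotent, the symmetric operator $Q=-\frac12\mathcal{J}_1+\frac14\mathcal{J}_2$ appearing there coincides with the Ricci operator $\Ri$ by Proposition \ref{pr}.

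First I would fix a derivation $D$ of $\G$ with $\tr(D)\neq0$ and an orthonormal basis $(e_1,\ldots,e_p)$ of $\G$, then substitute $E=D$ into \eqref{formula}. The crucial observation is that the quantity
$$D([e_i,e_j])-[D(e_i),e_j]-[e_i,D(e_j)]$$
measures precisely the failure of $D$ to be a derivation, and therefore vanishes identically for all $i,j$. Consequently the right-hand side of \eqref{formula} is zero, and since $Q=\Ri$ we obtain
$$\tr(\Ri\circ D)=\tr(QD)=0.$$

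Next, suppose $(\G,\br,\prs)$ is Einstein, so that $\Ri=\la\,\mathrm{Id}_\G$. Then $\tr(\Ri\circ D)=\la\,\tr(D)$, and comparison with the previous vanishing gives $\la\,\tr(D)=0$. As $\tr(D)\neq0$ by hypothesis, this forces $\la=0$, which is exactly the assertion that $(\G,\br,\prs)$ is Ricci flat. The converse implication is immediate, since a Ricci flat metric is Einstein with $\la=0$.

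I do not anticipate a genuine obstacle at this stage: all the substantive work is already contained in the identity \eqref{formula}, whose proof carries the one nontrivial computation. The only point demanding a little care is recognizing that the bracketed expression in \eqref{formula} is exactly the obstruction to $E$ being a derivation, which renders its vanishing transparent; after that the argument reduces to a single trace manipulation together with the nondegeneracy of $\prs$ underlying the definition of $\Ri$.
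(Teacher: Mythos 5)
Your proof is correct and follows exactly the paper's argument: apply Proposition \ref{pr4} with $E=D$, note that the derivation property kills the right-hand side of \eqref{formula}, and use \eqref{ricci1} to identify $Q$ with $\Ri$ for nilpotent $\G$, so that the Einstein condition yields $\la\,\tr(D)=0$ and hence $\la=0$. You have merely spelled out the details the paper leaves implicit; there is nothing to add.
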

\begin{proof}
Let $D\in\mathrm{Der}(\G)$ such that $\mathrm{tr}(D)\neq 0$. Write $\mathrm{Ric}=\lambda\mathrm{Id}_\G$, using formula \eqref{ricci1} and \eqref{formula} we get that $\lambda\mathrm{tr}(D)=0$ and therefore $\lambda=0$.
\end{proof}
\begin{rem} The derivations of nilpotent Lie algebras have been widely studied and computed (see \cite{goze}). It turns out that nilpotent Lie algebras having a derivation with non null trace are the most common. For instance, any nilpotent Lie algebra up to dimension 6 has this property and most of the nilpotent Lie algebras of dimension 7 have this property (see \cite{Dconti1}). 
	
\end{rem}

\section{ Einstein Lorentzian nilpotent Lie algebras with degenerate center}\label{section4}
In this section, we give a complete description of Einstein Lorentzian nilpotent Lie algebras with degenerate center. We will show that these Lie algebras are obtained by a double extension process of an abelian Euclidean Lie algebra. The double extension process was introduced by Medina-Revoy in \cite{medina} in the context of quadratic Lie algebras. It turned out to be useful in many other situations. We give here a version of this process adapted to our study.

Consider $(V,\prs_0)$ an Euclidean vector space, $b\in V$, $K,D:V\too V$ two endomorphisms of $V$ such that $K$ is skew-symmetric. We endow the vector space $\G=\R e\oplus V\oplus\R \bar{e}$  with the inner product
$\prs$ which extends $\prs_0$, so that $\mathrm{span}\{e,\bar{e}\}$ and $V$ are
orthogonal,
$e$ and $\bar{e}$ are isotropic and satisfy $\langle e,\bar{e}\rangle=1$.
We also define on
$\G$ the bracket 
\begin{equation}
\label{model} 
[\bar{e},e]=\mu e,\;\;\; [\bar{e},u]=D(u)+\langle
b,u\rangle_0 e\esp[u,v]=\langle K(u),v\rangle_0e,\quad u,v\in V.\end{equation}
 
 \begin{pr} 
 	Suppose that $(\G,\prs,\br)$ is obtained by a double extension process from a Euclidean vector space $(V,\prs_0)$ with parameters $(K,D,\mu,b)$, then 
 	\label{doubleext} 
	\begin{enumerate}
 		\item[$(i)$] $(\G,\br)$ is a Lie algebra if and only if : 
 		$$KD+D^*K=\mu K.$$ In this case $(\G,\br)$ is nilpotent if and only if $\mu=0$ and $D$ is nilpotent.
 		\item[$(ii)$]  $(\G,[\;,\;],\prs)$ is an Einstein Lorentzian Lie algebra if and only if
 		\[KD+D^*K=\mu K\esp 4\mu\tr(D)=\tr(K^2)+2\tr(D^2)+2\tr(DD^*). \]In this case, it is Ricci flat.
 	\end{enumerate}  
 \end{pr}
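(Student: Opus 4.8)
For part $(i)$ the plan is to verify skew-symmetry and the Jacobi identity directly on the generators $e,\bar e$ and $u,v\in V$. Skew-symmetry of $\br$ is immediate from $K^*=-K$. For Jacobi, every cyclic sum that involves $e$ vanishes, because $e$ is central in $\R e\oplus V$ and $[\bar e,e]=\mu e\in\R e$; the only triple producing a constraint is $(\bar e,u,v)$ with $u,v\in V$, where a short computation using $K^*=-K$ and the adjoint $D^*$ gives cyclic sum $\langle(KD+D^*K-\mu K)u,v\rangle e$. This vanishes for all $u,v\in V$ exactly when $KD+D^*K=\mu K$, which is the Lie algebra condition. For nilpotency I would note that $\ad_{\bar e}(e)=\mu e$, so $\mu\neq0$ makes $\ad_{\bar e}$ non-nilpotent and hence $\G$ non-nilpotent; conversely, when $\mu=0$ the line $\R e$ is a central ideal, $\G/\R e\cong\R\bar e\ltimes_D V$ with $V$ abelian is nilpotent iff $D$ is nilpotent, and nilpotency passes both up and down a central extension.

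For part $(ii)$ I would fix the pseudo-Euclidean basis $(e,f_1,\dots,f_n,\bar e)$ with $(f_i)$ orthonormal in $V$, and use the general Ricci formula established before Proposition \ref{pr}, rewritten as the operator identity $\Ri=-\frac12\mathcal B-\frac12\mathcal{J}_1+\frac14\mathcal{J}_2-\frac12(\ad_H+\ad_H^*)$, where $\mathcal B$ denotes the Killing operator. The first step is to compute $H$: since $\tr(\ad_e)=\tr(\ad_{f_i})=0$ and $\tr(\ad_{\bar e})=\mu+\tr(D)$, one gets $H=(\mu+\tr D)e$; moreover $\ad_e$ is self-adjoint with $\ad_e\bar e=-\mu e$ and $\ad_e e=\ad_e f_i=0$, so $\ad_H=(\mu+\tr D)\ad_e$.

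The heart of the argument is then to evaluate $\Ri$ on the basis, with all traces taken relative to the dual basis $e^*=\bar e$, $\bar e^*=e$, $f_i^*=f_i$. Using $J_e=0$ (since $e\in[\G,\G]^\perp$), $\ad_e e=0$ and $\tr(\ad_e^2)=0$, all four terms annihilate $e$, so $\Ri(e)=0$; a direct trace computation likewise gives $\mathcal B f_k=\mathcal{J}_1 f_k=\mathcal{J}_2 f_k=\ad_H f_k=0$, hence $\Ri(f_k)=0$. Finally one assembles the single nonzero value from $\mathcal B\bar e=(\mu^2+\tr D^2)e$, $\mathcal{J}_1\bar e=\tr(DD^*)e$, $\mathcal{J}_2\bar e=-(2\mu^2+\tr K^2)e$ and $-\ad_H\bar e=\mu(\mu+\tr D)e$, obtaining $\Ri(\bar e)=\frac14\big(4\mu\tr D-\tr(K^2)-2\tr(D^2)-2\tr(DD^*)\big)e$. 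Thus $\Ri$ has image in $\R e$ with $\Ri(e)=0$.

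The conclusion is then immediate: if $\Ri=\la\,\mathrm{Id}_\G$, evaluating at $e$ forces $\la=0$, so $\G$ is necessarily Ricci flat, and $\Ri=0$ holds iff $\Ri(\bar e)=0$, that is iff $4\mu\tr D=\tr(K^2)+2\tr(D^2)+2\tr(DD^*)$; conversely this relation yields $\Ri=0=\la\,\mathrm{Id}_\G$, which is both Einstein and Ricci flat. The main obstacle I anticipate is the bookkeeping of $\mathcal{J}_1,\mathcal{J}_2$ and the adjoints $\ad_x^*$ in the non-orthonormal Lorentzian basis: the self-adjointness of $\ad_e$ and the fact that $H\in\R e$ must be tracked carefully so that the non-unimodular $H$-term combines cleanly with the Killing term; once these traces are organized, the cancellations leaving only the coefficient of $e$ are routine.
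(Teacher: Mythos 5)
Your proposal is correct and follows essentially the same route as the paper: part $(i)$ via the Jacobi identity on the triple $(\bar e,u,v)$, and part $(ii)$ via the non-unimodular Ricci formula $\Ri=-\frac12\mathcal B-\frac12\mathcal{J}_1+\frac14\mathcal{J}_2-\frac12(\ad_H+\ad_H^*)$ evaluated in the same basis, with $H=(\mu+\tr D)e$ and the identical values $\mathcal B\bar e=(\mu^2+\tr D^2)e$, $\mathcal J_1\bar e=\tr(DD^*)e$, $\mathcal J_2\bar e=-(2\mu^2+\tr K^2)e$, so that $\Ri$ is concentrated on $\bar e\mapsto(\cdot)e$ and Einstein forces $\la=0$. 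Your only addition is to spell out the nilpotency equivalence (via $\ad_{\bar e}e=\mu e$ and the central extension $\G/\R e\cong\R\bar e\ltimes_D V$), which the paper leaves as an easy verification.
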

 \begin{proof} The bracket $[\;,\;]$ is a Lie bracket if and only if, for any $v,w\in V$, 
 	$$ [\bar{e},[v,w]]+[w,[\bar{e},v]]+[v,[w,\bar{e}]]=\langle (\mu K-K\circ D-D^*\circ K)(v),w\rangle_0 e=0.$$
 	Therefore, $(\G,[\;,\;])$ is a Lie algebra if and only if $\mu K=K\circ D+D^*\circ K$ and it is easy to see that $(\G,[\;,\;])$
is nilpotent if and only if $\mu=0$ and $D$ is a nilpotent endomorphism.
 
 Now we will compute the Ricci curvature of $(\G,\br,\prs)$ by using the formula
 \[ \ric(u,v)=-\frac12 B(u,v)-\frac12\langle \mathcal{J}_1(u),v\rangle+
 \frac14\langle \mathcal{J}_2(u),v\rangle-\frac12\langle\ad_Hu,v\rangle 
 --\frac12\langle\ad_Hv,u\rangle,\]where $B$ is the Killing form and $H$ is the vector defined in \eqref{jh}.
 
 We choose an orthonormal basis $(f_1,\ldots,f_n)$ of $V$ and we denote by $(K_0,\bar{K},S_1,\ldots,S_n)$ the  structure endomorphisms of $(e,\bar{e},f_1,\ldots,f_n)$. By a direct computation, we get that $B$ and $H$ are given by
 \[ H=(\mu+\tr(D))e,\; \R e\oplus V\subset\ker B\esp B(\bar{e},\bar{e})=\mu^2+\tr(D^2). \]On the other hand, $\bar{K}=0$ and for any $u,v\in\G$
 \[ \langle K_0(u),v\rangle=\langle[u,v],\bar{e}\rangle\esp 
 \langle S_i(u),v\rangle=\langle[u,v],f_i\rangle,\;u,v\in\G, i=1,\ldots,n. \]
 This gives that
 \[ \begin{cases}K_0(e)=-\mu e,K_0(\bar{e})=\mu \bar{e}+b,\; K_{0}(f_i)=K(f_i),\\
 S_i(e)=0, S_i(f_j)=-\langle D^*(f_i),f_j\rangle e \esp S_i(\bar{e})=D^*(f_i).
 \end{cases}
  \]
 From these relations, one can easily deduce that $\tr(K_0\circ S_i)=\tr(S_i\circ S_j)=0$ for $i,j=1,\ldots,n$ and hence
 \[\mathcal{J}_1=-\sum_{i=1}^n S_i^2\esp {\mathcal
 	J}_2=-\langle e,\bullet\rangle\tr(K_0^2)e. \]
 Using these expressions, a careful computation gives
 \[ \R e\oplus V\subset\ker\ric\esp \ric(\bar{e},\bar{e})
 =-\frac12\tr(D^2)-\frac12\tr(DD^*)-\frac14\tr(K^2)+\mu\tr(D). \]
 This completes the proof.	
 	 \end{proof}
 Any data $(K,D,\mu,b)$ satisfying the conditions in Proposition \ref{doubleext} is called \emph{admissible}. We can now state the main theorem of this section, which gives the structure of Einstein Lorentzian nilpotent Lie algebras with degenerate center.

\begin{theo}\label{main} Let $(\G,\prs)$ be an Einstein nilpotent non abelian Lorentzian Lie algebra and suppose that there exists $e\in Z(\G)$  a central isotropic vector. Then:
	\begin{enumerate}\item $Z(\G)$ is degenerate and $\G$ is Ricci-flat.
		\item $\G$ is obtained from $\G_0$ by the double extension process with  admissible data $(K,D,0,b)$ and $D$ is nilpotent.
	\end{enumerate} 
	
\end{theo}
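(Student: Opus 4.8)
The idea is to work with a pseudo-Euclidean basis adapted to $e$ and to extract all the structure from the two skew-symmetric endomorphisms $J_e$ and $J_{\bar e}$, together with the trace identities coming from Ricci-flatness and from nilpotency.

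First I would prove (1). Since $e\neq0$ is isotropic and lies in $Z(\G)$, the center contains an isotropic vector and so cannot be Euclidean; by Corollary \ref{co} a nondegenerate center would have to be Euclidean, hence $Z(\G)$ is degenerate. To get Ricci-flatness, complete $e$ to a pair $(e,\bar{e})$ of isotropic vectors with $\langle e,\bar{e}\rangle=1$ and put $V=\{e,\bar{e}\}^\perp$, a Euclidean space. Because $e\in Z(\G)$ we have $\ad_e=0$, so $J_e$ and $J_{\bar e}$ are skew-symmetric and kill $e$. Since $e\in Z(\G)\subset\ker\mathcal{J}_1$ by \eqref{re}, formula \eqref{ricci1} gives $\Ri(e)=\frac14\mathcal{J}_2(e)$, whence $0=\la\langle e,e\rangle=\langle\Ri(e),e\rangle=-\frac14\tr(J_e^2)$ using \eqref{12}. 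Thus $\tr(J_e^2)=0$ and Lemma \ref{le} applies to $J_e$. Evaluating $\la=\langle\Ri(\bar e),e\rangle$, the $\mathcal{J}_1$-term drops out because $\langle\mathcal{J}_1\bar e,e\rangle=\langle\bar e,\mathcal{J}_1e\rangle=0$, so $\la=-\frac14\tr(J_{\bar e}\circ J_e)$; by the second part of Lemma \ref{le} (applied to $J_e$, using $J_{\bar e}(e)=0$) this trace vanishes, giving $\la=0$.

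For (2), let $(e,\bar e,f_1,\dots,f_{n-2})$ be the pseudo-Euclidean basis with $(f_i)$ orthonormal in $V$, where $n=\dim\G$, and let $(A,\bar A,S_1,\dots,S_{n-2})$ be the associated structure endomorphisms; by \eqref{J} one has $\bar A=J_e$ and $A=J_{\bar e}$, and all of them vanish on $e$. Ricci-flatness forces $\tr\mathcal{J}_1=0$; expanding $\mathcal{J}_1$ by \eqref{invariant} and using $\tr(A\bar A)=0$ from the previous step leaves $\sum_i\tr(S_i^2)=0$. Each $S_i$ kills $e$, so $\tr(S_i^2)\le0$ by Lemma \ref{le}, and therefore every $\tr(S_i^2)=0$. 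The second part of Lemma \ref{le} then gives $S_i(e^\perp)\subseteq\R e$ and, from $\tr(J_e^2)=0$, also $\bar A(e^\perp)\subseteq\R e$. Hence for $u,v\in V\subset e^\perp$ we get $\langle S_iu,v\rangle=\langle\bar Au,v\rangle=0$, so that $[u,v]=\langle Au,v\rangle e\in\R e$; this produces the skew-symmetric operator $K$ on $V$ with $[u,v]=\langle Ku,v\rangle_0e$.

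The delicate point, which I expect to be the main obstacle, is to show that $[\bar e,u]$ has no $\bar e$-component for $u\in V$, i.e. that $e\in[\G,\G]^\perp$, equivalently $\bar A=J_e=0$. Here I would use nilpotency through unimodularity: each $\ad_{f_k}$ is nilpotent, so $\tr(\ad_{f_k})=\langle H,f_k\rangle=0$ with $H$ as in \eqref{jh}. Computing this trace in the basis above, the term $\langle[f_k,e],\bar e\rangle$ vanishes and $\sum_j\langle[f_k,f_j],f_j\rangle$ vanishes because $[V,V]\subseteq\R e$, so the only survivor is the $\bar e$-component $\langle[f_k,\bar e],e\rangle$ of $[f_k,\bar e]$. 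Thus $\langle[f_k,\bar e],e\rangle=0$ for all $k$, i.e. $\bar A\bar e\perp V$; since $\bar A\bar e\in V$ automatically (skew-symmetry together with $\bar Ae=0$), we get $\bar A\bar e=0$, and then $\bar A(e^\perp)\subseteq\R e$ combined with skew-symmetry forces $\bar A=0$. With $\bar A=0$ the bracket takes the form $[\bar e,e]=0$, $[\bar e,u]=D(u)+\langle b,u\rangle_0e$ and $[u,v]=\langle Ku,v\rangle_0e$, where $D=P_V\circ\ad_{\bar e}|_V$ and $b\in V$ is defined by $\langle b,u\rangle_0=\langle[\bar e,u],\bar e\rangle$. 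This exhibits $\G$ as the double extension of the abelian Euclidean algebra $V$ with data $(K,D,0,b)$; that the data is admissible and that $D$ is nilpotent then follow from Proposition \ref{doubleext}. I expect the identification $[V,V]\subseteq\R e$, followed by the unimodularity/trace step killing the $\bar e$-component, to be the crux, since everything else is bookkeeping with Lemma \ref{le} and \eqref{invariant}.
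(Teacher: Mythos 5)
Your proposal is correct and follows essentially the same route as the paper's proof: the same basis $(e,\bar e,f_1,\ldots,f_n)$ adapted to the central isotropic vector, the same use of Lemma \ref{le} together with the identity $\tr(\mathcal{J}_1)=\tr(\mathcal{J}_2)$ to force $\la=0$, $\tr(S_i^2)=0$ and hence $S_i(e^\perp)\subseteq\R e$, $J_e(e^\perp)\subseteq\R e$, leading to the double-extension form with Proposition \ref{doubleext} supplying admissibility. The only divergence is the step you rightly flag as the crux, killing $\bar K=J_e$: you derive $J_e(\bar e)=0$ from unimodularity ($\tr(\ad_{f_k})=0$, using $[V,V]\subseteq\R e$) and then conclude by skew-symmetry, whereas the paper observes that $\ad_u$ preserves $e^\perp$ and acts on $\G/e^\perp$ with eigenvalue $\al(u)$, so nilpotency forces $\al(u)=0$ directly — both are one-line consequences of nilpotency and your variant is equally valid.
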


\begin{proof} Denote $\mathcal{I}=\R e$ and choose an orthonormal basis $\B=(e,\bar{e},f_1,\ldots,f_n)$ of $\G$ such that $(e,f_1,\ldots,f_n)$ is a basis of $\mathcal{I}^\perp$, $\bar{e}$ is isotropic, $\langle e,\bar{e}\rangle=1$ and $(f_1,\ldots,f_n)$ is an orthonormal basis of $\{e,\bar{e}\}^\perp$. We denote by $(K,\bar{K},S_1,\ldots,S_n)$ the structure endomorphisms of $\B$, i.e., for any $u,v\in\G$,
	\[ [u,v]=\langle Ku,v\rangle e+\langle \bar{K}u,v\rangle \bar{e}+\sum_{i=1}^n\langle S_iu,v\rangle f_i. \]
	 Moreover, according to \eqref{ricci1} and  \eqref{invariant}, we have 
\begin{equation}\label{p}\begin{cases} -\frac12\mathcal{J}_1+\frac14\mathcal{J}_2=\la\mathrm{Id}_\G,\; {\mathcal
	J}_1=-\bar{K}\circ K-K\circ\bar{K}-\sum_{j=1}^nS_j^2,\\
 {\mathcal
 	J}_2=-\left[\langle e,\bullet\rangle\tr(K^2)+\sum_{i=1}^n\tr(K\circ S_i)\langle f_i,\bullet\rangle\right]e-\left[\langle \bar{e},\bullet\rangle\tr(\bar{K}^2)+\sum_{i=1}^n\tr(\bar{K}\circ S_i)\langle f_i,\bullet\rangle\right]\bar{e}
 -\langle e,\bullet\rangle\tr(K\circ\bar{K})\bar{e}\\-\langle \bar{e},\bullet\rangle\tr(K\circ\bar{K}){e}-\sum_{i=1}^n\langle e,\bullet\rangle\tr(K\circ S_i)f_i-\sum_{i=1}^n\langle \bar{e},\bullet\rangle\tr(\bar{K}\circ S_i)f_i-\sum_{i,j}\langle f_i,\bullet\rangle\tr(S_i\circ S_j)f_j.
\end{cases}
\end{equation}	 
Since $e\in Z(\G)$, then $K(e)=\bar{K}(e)=S_i(e)=0$ for $i=1,\ldots,n$ and $\mathcal{J}_1(e)=0$. This implies that $\frac14\mathcal{J}_2(e)=\la e$ which is equivalent to
\[ \frac14\tr(K\circ\bar{K})=-\la\esp \tr(\bar{K}^2)=\tr(\bar{K}\circ S_i)=0\;\mbox{for}\; i=1,\ldots,n. \]
According to Lemma \ref{le}, for any $x\in\mathcal{I}^\perp$, $\bar{K}(x)=\al(x)e$ and $-4\la=\tr(\bar{K}\circ K)=0$.
 On other hand, $\tr(\mathcal{J}_1)=\tr(\mathcal{J}_2)$ and from the first relation in \eqref{p} we deduce that   $\tr(\mathcal{J}_1)=-\sum_{i=1}^n\tr(S_i^2)=0$. By using Lemma \ref{le}, we also deduce that $\tr(S_i^2)=0$, for any $x\in\mathcal{I}^\perp$, $S_i(x)=s_i(x)e$ and $\tr(K\circ S_i)=\tr(S_i\circ S_j)=0$ for $i,j\in\{1,\ldots,n\}$. By skew-symmetry, we deduce that, for $j=1,\ldots,n$
 \[ \bar{K}(\bar{e})=-\sum_{i=1}^n\al(f_i)f_i\esp S_j(\bar{e})=-\sum_{i=1}^ns_j(f_i)f_i. \]
  On the other hand, for any $u\in \mathcal{I}^\perp$,
 \[ [\bar{e},u]=\langle K(\bar{e}),u\rangle e-\al(u)\bar{e}-\sum_{i=1}^ns_i(u)f_i. \]But $\ad_u$ is nilpotent and then we must have $\al(u)=0$ for any $u\in\mathcal{I}^\perp$ and hence $\bar{K}=0$.
 To sum up, if we put $V=\mathrm{span}\{f_1,\ldots,f_n\}$ and we define $D:V\too V$ by $D(u)=\sum_{i=1}^n\langle S_i(\bar{e}),u\rangle f_i$, then 
 \[ \begin{cases}\;[u,v]=\langle Ku,v\rangle e, u,v\in V,\\
 \;[\bar{e},u]=\langle K(\bar{e}),u\rangle e+D(u),\;u\in V,\\
 \mathcal{J}_2=\langle e,\bullet\rangle\tr(K^2)e,\\
 -\frac12\mathcal{J}_1+\frac14\mathcal{J}_2=0,\; {\mathcal
 	J}_1=-\sum_{j=1}^nS_j^2.
\end{cases} \]This completes the proof.
	\end{proof}
As an application of Theorem \ref{main} we recover the following results due to Guediri \cite[Lemma $14$ and Theorem $15$]{guediri} :
\begin{co}\label{co1} Let $(\G,\br,\prs)$ be an Einstein Lorentzian 2-step nilpotent Lie algebra. Then $Z(\G)$ is  degenerate and $\G$ is Ricci-flat.
\end{co}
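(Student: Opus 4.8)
The plan is to obtain the statement as an immediate consequence of the two heavy results already established in the paper, namely Corollary \ref{co0} and Theorem \ref{main}; no new computation is needed. We read ``$2$-step nilpotent'' in the sense of nilpotency class exactly $2$, so that $\G$ is non-abelian and $[\G,\G]\neq\{0\}$ with $[\G,[\G,\G]]=\{0\}$ (for $\G$ abelian the center is the whole nondegenerate space and the degeneracy assertion would be vacuous). Under this convention the degeneracy of $Z(\G)$ is \emph{exactly} the content of Corollary \ref{co0}, so the only thing left to prove is Ricci-flatness, and for this I would manufacture a central isotropic vector and feed it into Theorem \ref{main}.

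First I would invoke Corollary \ref{co0} to get that $Z(\G)$ is a degenerate subspace of the Lorentzian space $(\G,\prs)$. By the trichotomy for subspaces of a Lorentzian vector space recalled in Section \ref{section2}, a degenerate subspace $F$ satisfies $\dim(F\cap F^\perp)=1$; applying this with $F=Z(\G)$ produces a nonzero vector $e$ spanning $Z(\G)\cap Z(\G)^\perp$. This $e$ is automatically isotropic: since $e\in Z(\G)$ and simultaneously $e\in Z(\G)^\perp$, we have $\langle e,e\rangle=0$. Thus $e\in Z(\G)$ is a central isotropic vector.

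Finally, $(\G,\br,\prs)$ is an Einstein nilpotent non-abelian Lorentzian Lie algebra admitting the central isotropic vector $e\in Z(\G)$, so the hypotheses of Theorem \ref{main} are met verbatim. Its conclusion then gives at once that $Z(\G)$ is degenerate (reconfirming Corollary \ref{co0}) and that $\G$ is Ricci-flat, which is precisely the assertion. I expect no genuine obstacle here: the substantive work was carried out in Proposition \ref{pr1}, Proposition \ref{pr2} and Corollary \ref{co} (feeding Corollary \ref{co0}) and in Theorem \ref{main}. The only point deserving a line of justification is the passage from ``$Z(\G)$ degenerate'' to ``there exists a central isotropic vector,'' which rests solely on the elementary observation that a generator of $F\cap F^\perp$ is isotropic.
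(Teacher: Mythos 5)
Your proposal is correct and matches the paper's intended argument exactly: the paper states Corollary \ref{co1} as an application of Theorem \ref{main}, with the degeneracy of $Z(\G)$ supplied by Corollary \ref{co0} and the central isotropic vector obtained as a generator of $Z(\G)\cap Z(\G)^\perp$, just as you do. Your explicit handling of the abelian case and of the isotropy of the generator merely spells out steps the paper leaves implicit.
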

\begin{theo}
	Let $\n$ be a $2$-step nilpotent, non-abelian Lie algebra. Then $\G$ admits a Ricci-flat Lorentzian metric if and only if $\G=\R^n\oplus\n$ (a direct sum of Lie algebras) such that $\n$ is a Lie algebra for which the Lie brackets are expressed in a basis $\mathcal{B}=\{e,z_1,\dots,z_p,\bar{e},e_1,\dots,e_q\}$ as follows :
	\begin{equation}
	\label{guedeq0}
	[\bar{e},e_i]=\alpha_i e+\sum_{k=1}^p c_{ik}z_k,\;\;\;\;[e_i,e_j]=a_{ij}e,\;\;\;\;1\leq i,j\leq q,
	\end{equation}
	with $\sum\limits_{i,j=1}^q a_{ij}^2=2\sum\limits_{i=1}^q\sum\limits_{k=1}^p c_{ik}^2$. Moreover the basis $\mathcal{B}$ can be chosen Lorentzian, in particular the restriction of the metric to $[\G,\G]$ is degenerate.
\end{theo}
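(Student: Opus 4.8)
The plan is to prove the two implications separately, leaning on the machinery already set up, namely Corollary \ref{co1}, Theorem \ref{main} and Proposition \ref{doubleext}.

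\emph{The ``only if'' direction.} Suppose $\G$ is $2$-step nilpotent, non-abelian and carries a Ricci-flat (hence Einstein with $\la=0$) Lorentzian metric. By Corollary \ref{co1} the center $Z(\G)$ is degenerate, so $Z(\G)\cap Z(\G)^\perp$ contains an isotropic vector $e\in Z(\G)$. Theorem \ref{main} then applies and presents $\G$ as a double extension of an abelian Euclidean space $V$ with admissible data $(K,D,0,b)$, $D$ nilpotent, i.e. $[\bar e,u]=D(u)+\langle b,u\rangle_0 e$ and $[u,v]=\langle K(u),v\rangle_0 e$ for $u,v\in V$. The first genuine step is to squeeze the $2$-step condition: since $[\G,\G]\subset Z(\G)$ and $D(u)=[\bar e,u]-\langle b,u\rangle_0 e\in[\G,\G]$, each $D(u)$ is central. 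Writing out $[\bar e,D(u)]=D^2(u)+\langle b,D(u)\rangle_0 e=0$ and $[v,D(u)]=\langle K(v),D(u)\rangle_0 e=0$ yields $D^2=0$, $b\perp\mathrm{Im}(D)$ and $\mathrm{Im}(K)\perp\mathrm{Im}(D)$, whence by skew-symmetry both $K$ and $D$ vanish on $W:=\mathrm{Im}(D)$.

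Next I would decompose $V=W\oplus U_1\oplus U_0$, where $U_0=W^\perp\cap\ker D\cap\ker K\cap b^\perp$ and $U_1$ is a complement of $U_0$ in $W^\perp$. A short check shows $U_0$ is central in $\G$ and orthogonal to $\R e\oplus\R\bar e\oplus W$, so it detaches as an orthogonal abelian ideal $\R^n$ and $\G=\R^n\oplus\n$ with $\n=\R e\oplus\R\bar e\oplus W\oplus U_1$. Fixing an orthonormal basis $z_1,\dots,z_p$ of $W$ and $e_1,\dots,e_q$ of $U_1$ and setting $D(e_i)=\sum_k c_{ik}z_k$, $\alpha_i=\langle b,e_i\rangle_0$, $a_{ij}=\langle K(e_i),e_j\rangle_0$, one reads off precisely \eqref{guedeq0} in the Lorentzian basis $\mathcal{B}=\{e,z_1,\dots,z_p,\bar e,e_1,\dots,e_q\}$ (all remaining brackets vanish because $K$ and $D$ kill $W$ and $b\perp W$). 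It then remains to transcribe the admissibility identity of Proposition \ref{doubleext}: with $\mu=0$ and $D^2=0$ it reads $\tr(K^2)+2\tr(DD^*)=0$, and since $\tr(K^2)=-\sum_{i,j}a_{ij}^2$ and $\tr(DD^*)=\sum_{i,k}c_{ik}^2$ this is exactly $\sum_{i,j}a_{ij}^2=2\sum_{i,k}c_{ik}^2$. Finally $e$ is isotropic and orthogonal to $[\G,\G]\subset\R e\oplus W$, so the metric restricted to $[\G,\G]$ is degenerate.

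\emph{The ``if'' direction.} Conversely, given $\G=\R^n\oplus\n$ with $\n$ as in \eqref{guedeq0} and $\sum_{i,j}a_{ij}^2=2\sum_{i,k}c_{ik}^2$, I endow $\G$ with the metric making $\mathcal{B}$ Lorentzian (and $\R^n$ orthonormal Euclidean) and recognize $\n$ as the double extension with $\mu=0$, $K(e_i)=\sum_j a_{ij}e_j$, $K(z_k)=0$, $D(e_i)=\sum_k c_{ik}z_k$, $D(z_k)=0$ and $\langle b,e_i\rangle_0=\alpha_i$. A direct computation gives $KD=0$ and $D^*K=0$, so the first admissibility condition $KD+D^*K=\mu K$ holds, while the trace relation is exactly the second one; Proposition \ref{doubleext}$(ii)$ then makes $\n$ Ricci-flat Lorentzian, and adjoining the flat orthogonal Euclidean factor $\R^n$ keeps $\G$ Ricci-flat Lorentzian. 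The main obstacle is the ``only if'' direction: converting the abstract double-extension data delivered by Theorem \ref{main} into the displayed normal form, the delicate part being the isolation of the trivial abelian summand $\R^n=U_0$ and the choice of an adapted orthonormal basis in which $D$, $K$ and $b$ acquire their coordinate shape; once this is done the trace computation is entirely routine.
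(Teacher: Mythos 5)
Your proof is correct and follows essentially the same route as the paper: Corollary \ref{co1} and Theorem \ref{main} to obtain the double-extension data $(K,D,0,b)$, the $2$-step condition to force $\mathrm{Im}(D)\subset Z(\G)$ and $D^2=0$, an orthogonal splitting of $V$ isolating the abelian factor, and the translation of the trace identity of Proposition \ref{doubleext} into $\sum_{i,j}a_{ij}^2=2\sum_{i,k}c_{ik}^2$. Your summand $U_0=W^\perp\cap\ker D\cap\ker K\cap b^\perp$ coincides with the paper's $S$ (the orthocomplement of $\mathrm{Im}(D)$ in $V_0\cap Z(\G)$), since $Z(\G)\cap V=\ker D\cap\ker K\cap b^\perp$, so the two decompositions are the same in substance; just make sure to take $U_1$ as the \emph{orthogonal} complement of $U_0$ in $W^\perp$ so that $\mathcal{B}$ is Lorentzian.
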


\begin{proof} 
	Suppose that $(\G,[\;,\;],\langle\;,\;\rangle)$ is a $2$-step nilpotent Lorentzian, Ricci-flat Lie algebra. By virtue of Corollary \ref{co1},  $\mathrm{Z}(\G)$ is degenerate and   Theorem \ref{main} implies that $\G$ is given by a process of double extension from a Euclidean vector space $V_0$ with parameters $(K,D,0,b)$, i.e $\G=\R e\oplus V_0\oplus \R \bar{e}$ where $e,\bar{e}$ are isotropic vectors satisfying $\langle \bar{e},e\rangle=1$ and, for any $u,v\in V_0$, 
	\begin{equation}
	\label{guedeq1}
	[\bar{e},u]=D(u)+\langle b,u\rangle e,\;\;\;[u,v]=\langle K(u),v\rangle e.
	\end{equation}
	Moreover, Proposition \ref{doubleext} implies that $D^2=0$ and
	\begin{equation}
	\label{guedeq2}
	K\circ D+D^*\circ K=0,\;\;\;\;\;2\mathrm{tr}(DD^*)=-\mathrm{tr}(K^2).
	\end{equation}
	First, we observe that $\mathrm{Im}(D)\subset \mathrm{Z}(\G)\cap V_0$. Indeed, given $w\in\G$ and $u\in V_0$ we have that : $$[w,Du]=[w,Du+\langle b,u\rangle e]=[w,[\bar{e},u]]=0.$$
	Write $V_0=(V_0\cap\mathrm{Z}(\G))\overset{\perp}{\oplus}W_0$ and $V_0\cap\mathrm{Z}(\G)=\mathrm{Im}(D)\overset{\perp}{\oplus} S$, then $S$ is an abelian Lie subalgebra of $\G$ since it is contained in $\mathrm{Z}(\G)$ and we have that $\G=\R^n\oplus\n$ with $\n=\R e\oplus \R\bar{e}\oplus \mathrm{Im}(D)\oplus W_0$, moreover using \eqref{guedeq1} we can check that $\n$ is a Lie subalgebra of $\G$. Next, let $\{z_1,\dots,z_p\}$ be a Euclidean basis of $\mathrm{Im}(D)$ and let $\{e_1,\dots,e_q\}$ be a Euclidean basis of $W_0$. Write :
	$$D(e_i)=\sum_{k=1}^p c_{ik}z_k,\;\;\;\langle b,e_i\rangle=\alpha_i,\;\;\;\;\langle K(e_i),e_j\rangle=a_{ij}.$$
	Then it follows that :
	$$[\bar{e},e_i]=\alpha_i e+\sum_{k=1}^p c_{ik}z_k,\;\;\;\;[e_i,e_j]=a_{ij}e,\;\;\;\;1\leq i,j\leq q.$$
	Now 
	$$\mathrm{tr}(K^2)=-\sum\limits_{i,j=1}^q a_{ij}^2=2,\; \mathrm{tr}(DD^*)=\sum\limits_{i=1}^q\sum\limits_{k=1}^p c_{ik}^2$$ 
	and we conclude by \eqref{guedeq2}. Conversely, for any $2$-step nilpotent Lie algebra of the form $\G=\R^n\oplus\n$ satisfying \eqref{guedeq0}, we have $\mathrm{Ric}_{\G}=0$ follows from a straightforward calculation.
\end{proof}
\section{Classification of Einstein Lorentzian nilpotent Lie algebra of dimension $\leq 5$} 
In this  section, we give a complete description of the Lorentzian Lie algebras associated to all Einstein Lorentzian nilpotent Lie group of dimension $\leq5$. This is based on Theorem \ref{main} and the following result.
\begin{theo}\label{five} Let $(\G,\br,\prs)$ be an Einstein Lorentzian nilpotent Lie group of dimension $\leq5$. Then the center of $\G$ is degenerate.

\end{theo}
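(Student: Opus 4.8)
The plan is to argue by contradiction: assume $Z(\G)$ is nondegenerate and show $\G$ must be abelian. First I would narrow down the structure. Since $\dim\G\le 5$, $\G$ admits a derivation of nonzero trace (every nilpotent Lie algebra of dimension $\le 6$ does; see the remark after Proposition \ref{pr5}), so Proposition \ref{pr5} forces $\G$ to be Ricci-flat, i.e. $\la=0$. By Corollary \ref{co} a nondegenerate center is Euclidean; and if $[\G,\G]$ were degenerate, Proposition \ref{pr3} and the corollary following it would make $Z(\G)$ degenerate, against our assumption. Hence we may assume $Z(\G)$ is nondegenerate Euclidean and, by Proposition \ref{pr1}, that $[\G,\G]$ is nondegenerate Lorentzian.

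Next I would do the dimension bookkeeping. Put $d=\dim[\G,\G]$ and $c=\dim\left(Z(\G)\cap[\G,\G]\right)$. Since $\G$ is nilpotent and non abelian, the last nonzero term of its lower central series sits in $Z(\G)\cap[\G,\G]$, so $c\ge 1$, while the abelianization has dimension $\ge 2$, so $d\le\dim\G-2\le 3$. As $Z(\G)\cap[\G,\G]$ is a positive-definite subspace of the Lorentzian space $[\G,\G]$ (signature $(1,d-1)$), we get $c\le d-1$, whence $d\ge 2$; thus $d\in\{2,3\}$. Whenever the identity $d=c+1$ holds, that is for $d=2$ (which forces $c=1$) and for $d=3,\ c=2$, Proposition \ref{pr6} applies and yields that $Z(\G)$ is degenerate, a contradiction. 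This disposes of all cases except $\dim\G=5,\ d=3,\ c=1$, which is the crux.

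For this remaining case I would first pin down the algebra. A central element outside $[\G,\G]$ would, after quotienting by the line it spans, produce a $4$-dimensional nilpotent Lie algebra with $3$-dimensional derived ideal, which is impossible; hence $Z(\G)\subset[\G,\G]$ and $\dim Z(\G)=c=1$. A short look at the lower central series (using that $\G$ is $2$-generated with $\dim[\G,\G]=3$ and $1$-dimensional center) rules out nilpotency class $\le 3$ and shows $\G$ is \emph{filiform}, with lower central series of dimensions $5,3,2,1,0$. I would then choose an orthonormal basis $(e_1,e_2,e_3,f_1,f_2)$ with $\langle e_1,e_1\rangle=-1$, $[\G,\G]=\mathrm{span}\{e_1,e_2,e_3\}$, $[\G,\G]^\perp=\mathrm{span}\{f_1,f_2\}$ and $Z(\G)=\R e_3$, write out the structure endomorphisms $S_1,S_2,S_3$ (those attached to $f_1,f_2$ vanish because brackets land in $[\G,\G]$), and feed them into \eqref{ricci1}--\eqref{invariant}. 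Ricci-flatness gives $\mathcal{J}_1=\tfrac12\mathcal{J}_2$ and $\tr\mathcal{J}_1=\tr\mathcal{J}_2=0$; evaluating on the central vector $e_3$ yields $\tr(S_3\circ S_j)=0$ for all $j$, in particular $\tr(S_3^2)=0$, and the trace condition gives $\tr(S_1^2)=\tr(S_2^2)$.

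The hard part is converting these trace identities into $S_1=S_2=S_3=0$, which makes $\G$ abelian and closes the contradiction. The obstruction is that $\tr(S_3^2)=0$ can no longer be routed through Lemma \ref{le}, since the central direction $e_3$ is now spacelike rather than isotropic, so the sign arguments powering Propositions \ref{pr1}, \ref{pr3} and \ref{pr6} are unavailable. I would instead exploit the rigidity of the filiform bracket: the operator identity $S_1^2-S_2^2-S_3^2=\tfrac12\mathcal{J}_2$ restricted to $\mathrm{span}\{e_1,e_2\}$, together with $\mathcal{J}_1 f_k=0$ and the nilpotency of every $\mathrm{ad}_u$, should propagate down the filtration $\G\supset[\G,\G]\supset\G^3\supset\R e_3$ and force each $S_i$ to vanish. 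A cleaner and fully rigorous alternative is to invoke the known list of $5$-dimensional filiform Lie algebras and check on each normal form that none admits a Ricci-flat Lorentzian metric with Euclidean center; this finite verification is where the genuine computational effort concentrates.
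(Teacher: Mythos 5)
Your reduction is correct and is in fact a cleaner, classification-free version of the first half of the paper's argument. The paper proceeds by walking through de Graaf's list (Table \ref{tab:my-table}): every nilpotent algebra of dimension $\leq 5$ has a derivation of nonzero trace, so Proposition \ref{pr5} gives Ricci-flatness, and each algebra except $L_{5,6}$ and $L_{5,7}$ is either $2$-step nilpotent (Corollary \ref{co1}) or satisfies the hypothesis $\dim[\G,\G]=\dim(Z(\G)\cap[\G,\G])+1$ of Proposition \ref{pr6}. Your dimension bookkeeping ($c\geq 1$, $d\leq\dim\G-2$, and $c\leq d-1$ because a nondegenerate Euclidean center meets the Lorentzian $[\G,\G]$ in a positive-definite subspace) reaches the same residual case --- $\dim\G=5$, $d=3$, $c=1$, which your filiform analysis correctly identifies with the algebras satisfying \eqref{5}, i.e.\ $L_{5,6}$ and $L_{5,7}$ --- without ever invoking the classification, which is a genuine improvement in the reduction step.

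However, at precisely the point where the theorem's real content lies, your proposal stops being a proof. The claim that the operator identity restricted to $\mathrm{span}\{e_1,e_2\}$ ``should propagate down the filtration and force each $S_i$ to vanish'' is a hope, not an argument, and you flag it as such; the fallback of checking normal forms of $5$-dimensional filiform algebras is likewise announced but not performed. Two concrete obstacles make the gap more than cosmetic. First, your setup fixes a single orthonormal basis adapted to $Z(\G)\subset[\G,\G]$, but the causal type of the middle term $[\G,[\G,\G]]$ of the flag \eqref{5} is not determined by your hypotheses: it may be nondegenerate Euclidean, nondegenerate Lorentzian, or degenerate, and in the degenerate case no orthonormal basis adapted to the full flag exists (one needs an isotropic pair, as in the paper's third case with metric $\mathrm{Diag}[1,1,\left(\begin{smallmatrix}0&1\\1&0\end{smallmatrix}\right),1]$). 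The paper's proof of Theorem \ref{five} is, for this reason, a three-case analysis keyed to this trichotomy, in each case writing the general bracket with parameters constrained by $a\neq 0$, $(z,d)\neq(0,0)$ (encoding $\dim[\G,\G]=3$ and $\dim[\G,[\G,\G]]=2$), imposing Jacobi, and computing the full $5\times 5$ Ricci matrix. Second, your expected conclusion $S_1=S_2=S_3=0$ is not how the contradiction actually materializes: the trace identities you extract ($\tr(S_3\circ S_j)=0$ and $\tr(S_1^2)=\tr(S_2^2)$, which are correct) are much weaker than vanishing of the $S_i$, since Lemma \ref{le} is unavailable for the spacelike central direction; in the paper the contradiction comes instead from specific off-diagonal and diagonal Ricci entries forcing $a=0$ or $c=d=z=0$, violating the rank constraints. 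So the skeleton of your proof matches the paper, but the decisive computational core --- the only part that distinguishes this theorem from its hypotheses --- is missing.
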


\begin{proof} We use the classification of nilpotent Lie algebras up to dimension 6 given by \cite{graaf}.In Table \ref{tab:my-table}, we give the list of nilpotent Lie algebras up to dimension 5 and for each  of them we give a derivation with a non zero trace. We will also use Corollary \ref{co1} and Proposition \ref{pr6}.
	
	There is a unique nilpotent Lie algebra in dimension $3$ which is $L_{3,2}$ and it is 2-step nilpotent hence we can apply Corollary \ref{co}. In dimension 4, there is two  nilpotent Lie algebras namely $L_{4,2}$ whose center is degenerate by Corollary \ref{co1} and $L_{4,3}$ whose Lie bracket is given by
	\[ [e_{1},e_{2}]= e_{3},\;[e_{1},e_{3}]= e_{4}. \]
	It is clear that $\mathrm{L}_{4,3}$ satisfies the hypothesis of Proposition \ref{pr6}. \\
	
	We can see that apart from $L_{5,6}$ and $L_{5,7}$ all the other Lie algebras are either 2-step nilpotent or satisfy the hypothesis of Proposition \ref{pr6}. Let us now study $L_{5,6}$ and $L_{5,7}$. 
	
	If we denote by $\G$ either $L_{5,6}$ or $L_{5,7}$, one can see that \begin{equation}\label{5} Z(\G)\subset[\G,[\G,\G]]\subset[\G,\G],\; \dim Z(\G)=1,\; \dim[\G,[\G,\G]]=2  \esp \dim[\G,\G]=3.\end{equation} To complete the proof of the theorem, we will show that if a five dimensional nilpotent Lie algebra $\G$ satisfies \eqref{5} and have an Einstein Lorentzian metric then its center must be degenerate.

Let $(\G,\br,\prs)$ be a five dimensional Einstein Lorentzian nilpotent Lie algebra satisfying \eqref{5} such that its center nondegenerate. Note first that according to Prosition \ref{pr5}, $(\G,\br,\prs)$ must be Ricci flat.

According to Corollary \ref{co} and Propositions \ref{pr1} and \ref{pr3}, $Z(\G)$ must be Euclidean and $[\G,\G]$ must be nondegenerate Lorentzian. We distinguish three cases. \\\\	
$1.$ {\bf $[\G,[\G,\G]]$ is non degenerate Euclidean}. Then we can choose an orthonormal  basis $(f_1,f_2,f_3,f_4,f_5)$ such that $\langle f_3,f_3\rangle=-1$, $Z(\G)=\R f_5$, $[\G,[\G,\G]]=\mathrm{span}\{f_4,f_5\}$ and $[\G,\G]=\mathrm{span}\{f_3,f_4,f_5\}$. So
	\[ \begin{cases}
	[f_1,f_2]=af_3+bf_4+cf_5,[f_1,f_3]=df_4+xf_5,[f_1,f_4]=yf_5,\\ [f_2,f_3]=zf_4+tf_5,[f_2,f_4]=uf_5,\;[f_3,f_4]=vf_5,\;\quad a\not=0,(z,d)\not=(0,0).
	\end{cases} \]
	This bracket satisfies the Jacobi identity if and only if $v=0$ and $yz-du=0$. The Ricci operator is given by\\\\
	{\footnotesize
		{\[\displaystyle \frac12 \left[ \begin {array}{ccccc} {a}^{2}-{b}^{2}-{c}^{2}+{d}^{2}+{x}^{2}-{y}^{2}&dz+xt-yu&zb+ct&cu&0\\ \noalign{\medskip}dz+xt-yu&{a}^{2}-{b}^{2}-{c}^{2}+{z}^{2}+{t}^{2}-{u}^{2}&-bd-cx&-cy&0\\ \noalign{\medskip}-zb-ct&bd+cx&-{a}^{2}+{d}^{2}+{x}^{2}+{z}^{2}+{t}^{2}&ab+xy+tu&ac\\ \noalign{\medskip}cu&-cy&-ab-xy-tu&{b}^{2}-{d}^{2}-{y}^{2}-{z}^{2}-{u}^{2}&bc-dx-zt\\ \noalign{\medskip}0&0&-ac&bc-dx-zt&{c}^{2}-{x}^{2}+{y}^{2}-{t}^{2}+{u}^{2}\end {array} \right]. \]}}
	Since $a\not=0$ then $c=0$ and hence the Ricci operator is given by
	{\small
		{\[\displaystyle \frac12 \left[ \begin {array}{ccccc} {a}^{2}-{b}^{2}+{d}^{2}+{x}^{2}-{y}^{2}&dz+xt-yu&zb&0&0\\ \noalign{\medskip}dz+xt-yu&{a}^{2}-{b}^{2}+{z}^{2}+{t}^{2}-{u}^{2}&-bd&0&0\\ \noalign{\medskip}-zb&bd&-{a}^{2}+{d}^{2}+{x}^{2}+{z}^{2}+{t}^{2}&ab+xy+tu&\\ \noalign{\medskip}0&0&-ab-xy-tu&{b}^{2}-{d}^{2}-{y}^{2}-{z}^{2}-{u}^{2}&-dx-zt\\ \noalign{\medskip}0&0&&-dx-zt&-{x}^{2}+{y}^{2}-{t}^{2}+{u}^{2}\end {array} \right]. \]}}
	
	The couple $(z,d)\not=(0,0)$ otherwise $\dim[\G,\G]\leq2$, hence $b=0$. So
	{\small
		{\[\displaystyle \frac12 \left[ \begin {array}{ccccc} {a}^{2}+{d}^{2}+{x}^{2}-{y}^{2}&dz+xt-yu&0&0&0\\ \noalign{\medskip}dz+xt-yu&{a}^{2}+{z}^{2}+{t}^{2}-{u}^{2}&0&0&0\\ \noalign{\medskip}0&0&-{a}^{2}+{d}^{2}+{x}^{2}+{z}^{2}+{t}^{2}&xy+tu&\\ \noalign{\medskip}0&0&-xy-tu&-{d}^{2}-{y}^{2}-{z}^{2}-{u}^{2}&-dx-zt\\ \noalign{\medskip}0&0&&-dx-zt&-{x}^{2}+{y}^{2}-{t}^{2}+{u}^{2}\end {array} \right]. \]}}
	So we must have  $\Ri_{4,4}=-{d}^{2}-{y}^{2}-{z}^{2}-{u}^{2}=0$ and $\Ri_{2,2}={a}^{2}+{z}^{2}+{t}^{2}-{u}^{2}=0$, but then $a=0$ which is impossible.\\
	$2.$ {\bf $[\G,[\G,\G]]$ is nondegenerate Lorentzian}. As in the previous case,  we can choose an orthonormal  basis $(f_1,f_2,f_3,f_4,f_5)$ such that $\langle f_4,f_4\rangle=-1$ and $Z(\G)=\R f_5$, $[\G,[\G,\G]]=\mathrm{span}\{f_4,f_5\}$ and $[\G,\G]=\mathrm{span}\{f_3,f_4,f_5\}$. So
	\[ \begin{cases}
	[f_1,f_2]=af_3+bf_4+cf_5,[f_1,f_3]=df_4+xf_5,[f_1,f_4]=yf_5,\\ [f_2,f_3]=zf_4+tf_5,[f_2,f_4]=uf_5,\;[f_3,f_4]=vf_5,\;\quad a\not=0,(z,d)\not=(0,0).
	\end{cases} \]The Jacobi identity is given by $ bv-ud+yz=av=0$, hence $v=0$. Thus the Ricci operator is given by\\\\{\footnotesize
		{\[\displaystyle \frac12 \left[ \begin {array}{ccccc} -{a}^{2}+{b}^{2}-{c}^{2}+{d}^{2}-{x}^{2}+{y}^{2}&dz-xt+yu&-zb+ct&cu&0\\ \noalign{\medskip}dz-xt+yu&-{a}^{2}+{b}^{2}-{c}^{2}+{z}^{2}-{t}^{2}+{u}^{2}&bd-cx&-cy&0\\ \noalign{\medskip}-zb+ct&bd-cx&{a}^{2}+{d}^{2}-{x}^{2}+{z}^{2}-{t}^{2}&-ab-xy-tu&ac\\ \noalign{\medskip}-cu&cy&ab+xy+tu&-{b}^{2}-{d}^{2}+{y}^{2}-{z}^{2}+{u}^{2}&bc+dx+zt\\ \noalign{\medskip}0&0&ac&-bc-dx-zt&{c}^{2}+{x}^{2}-{y}^{2}+{t}^{2}-{u}^{2}\end {array} \right] \]}}
	So we get $b=c=0$ and hence
	The Ricci operator is given by{\small
		{\[\displaystyle \frac12 \left[ \begin {array}{ccccc} -{a}^{2}+{d}^{2}-{x}^{2}+{y}^{2}&dz-xt+yu&0&0&0\\ \noalign{\medskip}dz-xt+yu&-{a}^{2}+{z}^{2}-{t}^{2}+{u}^{2}&0&0&0\\ \noalign{\medskip}0&0&{a}^{2}+{d}^{2}-{x}^{2}+{z}^{2}-{t}^{2}&-xy-tu&0\\ \noalign{\medskip}0&0&xy+tu&-{d}^{2}+{y}^{2}-{z}^{2}+{u}^{2}&dx+zt\\ \noalign{\medskip}0&0&0&-dx-zt&{x}^{2}-{y}^{2}+{t}^{2}-{u}^{2}\end {array} \right] \]}}
	Now $0=\Ri_{3,3}+\Ri_{4,4}+\Ri_{5,5}=\frac12a^2$ and hence $a=0$ which is impossible.\\
	$3.$ {\bf $[\G,[\G,\G]]$ is  degenerate }. Then we can choose a basis $(f_1,f_2,f_3,f_4,f_5)$ the metric in this basis is given by
	$$\mathrm{Diag}\left[1,1,\left(\begin{matrix}
	0&1\\1&0
	\end{matrix} \right),1\right],$$
	 and $Z(\G)=\R f_5$, $[\G,[\G,\G]]=\mathrm{span}\{f_4,f_5\}$ and $[\G,\G]=\mathrm{span}\{f_3,f_4,f_5\}$. So
	\[ \begin{cases}
	[f_1,f_2]=af_3+bf_4+cf_5,[f_1,f_3]=df_4+xf_5,[f_1,f_4]=yf_5,\\ [f_2,f_3]=zf_4+tf_5,[f_2,f_4]=uf_5,\;[f_3,f_4]=vf_5,\;\quad a\not=0,(z,d)\not=(0,0).
	\end{cases} \]The Jacobi identity is given by $ bv-ud+yz=av=0$. Hence $v=0$. The Ricci operator is given by
	{\[\displaystyle  \frac12\left[ \begin {array}{ccccc} -2\,ab-{c}^{2}-2\,xy&-yt-xu&az+ct&cu&0\\ \noalign{\medskip}-yt-xu&-2\,ab-{c}^{2}-2\,tu&-cx-ad&-cy&0\\ \noalign{\medskip}cu&-cy&ab-xy-tu&{a}^{2}-{y}^{2}-{u}^{2}&ac\\ \noalign{\medskip}az+ct&-cx-ad&{b}^{2}-{x}^{2}-{t}^{2}&ab-xy-tu&bc+dy+zu\\ \noalign{\medskip}0&0&bc+dy+zu&ac&{c}^{2}+2\,xy+2\,tu\end {array} \right]. \]}
	So $c=d=z=0$ which is impossible.
\end{proof}
As a consequence of Theorem \ref{main} and Theorem \ref{five}, we can give the complete classification of Ricci flat Lorentzian metrics on nilpotent Lie algebras of dimension $\leq 5$. We will also make use of the following Lemma :
\begin{Le}\label{KD} Let $(V,\prs)$ be a Euclidean vector space, $K$ and $D$ two endomorphisms of $V$ such that $K$ is skew-symmetric.  Then  $KD+D^*K=0$ if and only if there exists a vector subspace $F$ of $V$ and linear maps $D_1:F\too F$, $D_2:F^\perp\too F$, $K_0,S:F^\perp\too F^\perp$ where $K_0$ is skew-symmetric invertible, $S$ symmetric and for any $u\in V$,
	\[ Du=\left\{\begin{matrix}
	D_1(u)&\mbox{if}&u\in F,\\ D_2(u)+K_0^{-1}S(u)&\mbox{if}&u\in F^\perp
	\end{matrix}   \right. \esp 
	Ku=\left\{\begin{matrix}
	0&\mbox{if}&u\in F,\\ K_0(u)&\mbox{if}&u\in F^\perp.
	\end{matrix}   \right. \]
	
\end{Le}

\begin{proof} Suppose that $KD+D^*K=0$ and put $F=\ker K$. Obviously $D(F)\subset F$,  $K(F^\perp)\subset F^\perp$ and the restriction $K_0$ of $K$ to $F^\perp$ is skew-symmetric invertible. Denote by $D_1$ the restriction of $D$ to $F$ and put for any $u\in F^\perp$, $Du=D_2u+D_3u$ where $D_2u\in F$ and $D_3u\in F^\perp$. Then
	\[ 0=K(D_2u+D_3u)+D^*K_0(u)=K_0D_3u+D_3^*K_0(u). \]
	Thus $K_0D_3=S$ where $S:F^\perp\too F^\perp$ is a symmetric endomorphism and $D_3=K_0^{-1}S$. The converse is obviously true. 	
\end{proof}

\begin{theo} \label{34} Let $(\G,\br,\prs)$ be a Ricci-flat nilpotent Lie algebra of dimension $\leq4$. Then
\begin{enumerate}
	\item[$(i)$] If $\dim\G=3$ then $\G$ is isomorphic to $(L_{3,2},\prs_{3,2})$ with the  $\prs_{3,2}=\al e^*_1\odot e^*_3+e^*_2\otimes e^*_2$ with $\al>0$. This metric is actually flat.
	\item[$(ii)$] If $\dim\G=4$ then $\G$ is isomorphic to $(L_{4,2},\prs_{4,2})$ with  $$\prs_{4,2}=\al e^*_1\odot e^*_3+e^*_2\otimes e^*_2+e^*_4\otimes e^*_4+a e^*_2\odot e^*_4,\quad \al\not=0,|a|<1,$$ or  to $(L_{4,3},\prs_{4,3})$ with  $$\prs_{4,3}=e^*_1\otimes e^*_1+a e^*_1\odot e^*_2+(a^2+b^2) e^*_2\otimes e^*_2+be^*_2\odot e^*_3+\e e^*_2\odot e^*_4+e^*_3\otimes e^*_3,\; a,b\in\R,\e=\pm1.$$
	The metric $\prs_{4,2}$ is flat and $\prs_{4,3}$ is flat if and only if $\e=-1$.
\end{enumerate}	
\end{theo}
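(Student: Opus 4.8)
The plan is to prove Theorem \ref{34} by combining the structural results already established with a direct exploitation of the double extension description, and then performing a change of basis to normalize the metric into the stated forms. The starting point is Theorem \ref{five}, which tells us that in dimensions $\leq 5$ the center of any Einstein Lorentzian nilpotent Lie algebra is degenerate; hence by Theorem \ref{main} such a $\G$ is Ricci-flat and arises as a double extension of an \emph{abelian} Euclidean Lie algebra $\G_0=V$ with admissible data $(K,D,0,b)$, where $K$ is skew-symmetric on $V$, $D$ is nilpotent, and the bracket is given by \eqref{model} with $\mu=0$. The admissibility condition from Proposition \ref{doubleext} reduces to $KD+D^*K=0$ together with the trace relation $\tr(K^2)+2\tr(D^2)+2\tr(DD^*)=0$; since $D$ is nilpotent we have $\tr(D^2)=0$, so this becomes $\tr(K^2)=-2\tr(DD^*)$. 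This is where Lemma \ref{KD} enters: it gives the precise normal form of the pair $(K,D)$ satisfying $KD+D^*K=0$, splitting $V=F\oplus F^\perp$ with $F=\ker K$.

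The second step is a dimension count to enumerate cases. First I would treat $\dim\G=3$: here $V$ is one-dimensional, so $K=0$ and $D$ is a nilpotent endomorphism of a line, hence $D=0$; the only nonzero bracket is $[\bar e,u]=\langle b,u\rangle_0 e$ (after possibly rescaling, a single relation of the form $[f,\bar e]=\text{const}\cdot e$), which up to isomorphism is $L_{3,2}$. The isotropic pair $(e,\bar e)$ together with the one-dimensional Euclidean piece produces precisely the metric $\prs_{3,2}=\al e_1^*\odot e_3^*+e_2^*\otimes e_2^*$; the free parameter $\al>0$ records the remaining scaling freedom on the isotropic plane. For $\dim\G=4$, $\dim V=2$, and Lemma \ref{KD} forces either $K=0$ (so $F=V$, giving $L_{4,3}$ type with $D$ a nonzero nilpotent on $V$) or $K$ invertible on a two-dimensional $F^\perp$ (so $\ker K=0$, forcing $D$ to be of the constrained form $K_0^{-1}S$ with $S$ symmetric, which after the trace relation gives the $L_{4,2}$ type). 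I would identify which abstract Lie algebra ($L_{4,2}$ or $L_{4,3}$) each case yields by computing the lower central series of the resulting bracket and matching against the table.

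The third and most delicate step is the normalization of the metric and the flatness determination. Once the Lie algebra is fixed abstractly, the remaining data is the inner product, and I would parametrize all Lorentzian metrics on $L_{4,2}$ and $L_{4,3}$ compatible with the Einstein (equivalently Ricci-flat) condition, then apply automorphisms of the Lie algebra to reduce to canonical representatives. The automorphism group of each $L_{k,\ell}$ acts on the space of admissible metrics, and the orbit representatives are exactly the families $\prs_{4,2}$ and $\prs_{4,3}$ with the stated parameter ranges; the constraints $\al\neq0$, $|a|<1$, and the structure $a^2+b^2$ in $\prs_{4,3}$ come from requiring the form to have Lorentzian signature and to be preserved under the normalization. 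Finally, flatness is checked by computing the full curvature operator $K(u,v)$ from \eqref{curvature} (equivalently the Levi-Civita products via \eqref{levicivita}) on the canonical basis; I expect $\prs_{3,2}$ and $\prs_{4,2}$ to be flat unconditionally, while for $\prs_{4,3}$ the sign $\e=\pm1$ enters the curvature and vanishing of $K$ occurs precisely when $\e=-1$.

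The main obstacle will be the metric normalization in the $4$-dimensional case: translating the abstract double-extension data into an explicit inner product, then grinding out the automorphism action to isolate a minimal parameter family without losing any isometry class. The Lie algebra identification and the Ricci-flatness are essentially handed to us by the preceding theory, but the classification \emph{up to isometry} requires care to show that the listed parameters $(\al,a,b,\e)$ are both exhaustive and non-redundant, and that the signature really is Lorentzian throughout the stated ranges. The flatness computation, while routine via the Koszul formula, must be carried out honestly to confirm the dichotomy on $\e$ for $\prs_{4,3}$.
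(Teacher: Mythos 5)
Your overall route is the same as the paper's: reduce via Theorems \ref{five} and \ref{main} to double-extension data $(K,D,0,b)$ over a Euclidean $(V,\prs_0)$ with $KD+D^*K=0$ and $\tr(K^2)=-2\tr(DD^*)$, invoke Lemma \ref{KD}, enumerate by $\dim V$, normalize by an explicit change of basis, and check flatness by direct computation. Your dimension-$3$ case is correct. But your dimension-$4$ case split contains a genuine error. If $K=0$, the trace relation you yourself derived forces $\tr(DD^*)=0$, and since $\prs_0$ is Euclidean, $\tr(DD^*)=\sum_i\langle Df_i,Df_i\rangle_0\geq 0$ with equality only when $D=0$. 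So the branch ``$K=0$ with $D$ a nonzero nilpotent'' is vacuous: $K=0$ forces $D=0$, the only surviving bracket is $[\bar e,u]=\langle b,u\rangle_0\,e$, the derived ideal is the line $\R e$, and the algebra is the $2$-step algebra $L_{4,2}$ --- not $L_{4,3}$ as your parenthetical claims. Conversely, when $K\neq 0$ (automatically invertible on the $2$-dimensional $V$), one has $\tr(K^2)=-2\al^2<0$, so the trace relation forces $D=K^{-1}S\neq 0$; the resulting brackets $[f_1,f_2]=\al e$, $[\bar e,f_2]=\e\al f_1+\mu e$ make the algebra $3$-step, i.e.\ $L_{4,3}$. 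Your two labels are exactly swapped, and your hedge about computing the lower central series would fix the labels but not the belief that $K=0$ can carry a nonzero $D$ --- a belief that would send you hunting for an $L_{4,3}$-type metric in a branch where none exists.

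A related inaccuracy: you attribute the parameters in the normal forms to signature constraints, but $\e=\pm1$ in $\prs_{4,3}$ is not a signature artifact; it arises precisely from the Ricci-flatness trace condition, which in the $K\neq0$ case forces $c=\e\al$ where $D$ has the single off-diagonal entry $c=s\al^{-1}$ (and $|a|<1$ in $\prs_{4,2}$ comes from the normalization $a=\be_1/\sqrt{1+\be_1^2}$ of the ratio of structure constants). Finally, note that the paper does not carry out your proposed program of parametrizing all Einstein metrics on each $L_{4,\ell}$ and quotienting by the automorphism group; it simply transports the fixed double-extension metric through an explicit isomorphism onto the graded basis of $L_{4,2}$ or $L_{4,3}$ and reads off $\prs_{4,2}$, $\prs_{4,3}$. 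Your orbit-based plan could in principle also address non-redundancy of the parameters (which the paper leaves implicit), but it is substantially heavier than what is needed for the statement as written.
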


\begin{proof} Let $(\G,\br,\prs)$ be a Einstein Lorentzian nilpotent non abelian Lie algebra of dimension $\leq5$. According Theorems \ref{main} and \ref{five}, $\G=\R e\oplus V\oplus \R\bar{e}$, where $(V,\prs_0)$ is a Euclidean vector space.  The Lie brackets are given by
	\[ [\bar{e},u]=Du+\langle b,u\rangle_0 e\esp [u,v]=\langle Ku,v\rangle_0 e, \quad u,v\in V,\] $e$ is central,   $b\in V,$ $K,D:V\too V$ with $K$ skew-symmetric, $D$ is nilpotent, $KD+D^*K=0$ and $\tr(K^2)=-2\tr(D^*D)$ furthermore metric $\prs$ satisfies $\prs_{|V}=\prs_0$, $e$ and $\bar{e}$ are isotropic in duality and orthogonal to $V$.
	
	\begin{enumerate}
		\item $\dim \G=3$ and $\dim V=1$. Then $K=D=0$ and hence $(\G,\br,\prs)$ is isomorphic $(L_{3,2},\prs_{3,2})$  where $\prs_{3,2}=\al e^*_1\odot e^*_3+e^*_2\otimes e^*_2$ and $\al>0$. This metric is flat.
		\item $\dim \G=4$ and $\dim V=2$. We distinguish two cases.
		\begin{itemize}

			\item If $K=0$ then $D=0$ and there exists a Lorentzian basis $(\bar{e},e,f_1,f_2)$ of $\G$ such that
			\[ [\bar{e},f_1]=\al e\esp [\bar{e},f_2]=\be e,\quad\al\not=0. \]  
			Put $$(e_1,e_2,e_3,e_4)=(\e\bar{e},f_1,|\al| e,\mu^{-1}(f_2-\frac{\be}{\al}f_1)),$$ where $\e$ is the sign of $\al$ and $\mu=||f_2-\frac{\be}{\al}f_1||$.
			Thus $(\G,\br,\prs)$ is isomorphic to $(L_{4,2},\prs_{4,2})$ with the metric
			$$\prs_{4,2}=\al e^*_1\odot e^*_3+e^*_2\otimes e^*_2+e^*_4\otimes e^*_4+a e^*_2\odot e^*_4,\quad \al\not=0$$ and $a=\frac{\be_1}{\sqrt{1+\be_1^2}}$ where $\be_1=\frac{\be}{\al}$. So $|a|<1$.
			
			\item	If $K\not=0$ then, according to Lemma \ref{KD},  $D=K^{-1}S$ where $S$ is symmetric. Since $D$ must be nilpotent then the rank of $S$ is equal to 1 and  there exists an orthonormal basis $\B_0=(f_1,f_2)$ of $V$ such that the matrices of $K$, $S$ and $D$ are given by
			\[ M(S,\B_0)=\mathrm{Diag}(0,s),\; M(K,\B_0)=\left(\begin{matrix}
			0&-\al\\\al&0
			\end{matrix}  \right)\esp M(D,\B_0)=\left(\begin{matrix}
			0&s\al^{-1}\\0&0
			\end{matrix}  \right),\;\al>0. \] Put $c=s\al^{-1}$. The condition $\tr(K^2)=-2\tr(D^*D)$ gives $c=\e\al$ with $\e=\pm1$.
			Thus the Lie brackets are given by
			\[ [\bar{e},f_1]=\ga e,\;[\bar{e},f_2]=\e\al f_1+\mu e\esp [f_1,f_2]=\al e. \]Put $$(e_1,e_2,e_3,e_4)=(f_2,-\e\al^{-1}\bar{e}+af_1+bf_2,f_1,-\al e)$$ with $a=\e\mu\al^{-2}$ and $b=-\e\ga\al^{-2}$.
			Then $(\G,\br,\prs)$ is isomorphic to $(L_{4,3},\prs_{4,3})$.\qedhere
			
		\end{itemize}
	\end{enumerate}
	
\end{proof}

\begin{theo}\label{Five} Let $(\G,\br,\prs)$ be a Ricci-flat nilpotent Lie algebra of dimension $5$. Then
	 $(\G,\br,\prs)$ is isomorphic to one of the following Lie algebras:
		\begin{enumerate}
			\item[$(a)$] $(L_{5,2},\prs_{5,2})$ with 
			$$\prs_{5,2}=	\al e^*_1\odot e^*_3+e^*_2\otimes e^*_2+e^*_4\otimes e^*_4+e^*_5\otimes e^*_5+a e^*_2\odot e^*_4+b e^*_2\odot e^*_5+ab e^*_4\odot e^*_5,\;\al\not=0,|a|<1,|b|<1.$$ This metric is flat.
			\item[$(b)$] $(L_{5,8},\prs_{5,8})$ with 
			\begin{eqnarray*}
				\prs_{5,8}&=&e^*_1\otimes e^*_1+ae^*_1\odot e^*_2-yx^{-1}e^*_1\odot e^*_3+(b-ayx^{-1})e^*_2\odot e^*_3+(a^2+b^2)e^*_2\otimes e^*_2\\	
				&&+\sqrt{x^2+y^2} e^*_2\odot e^*_5+(1+(yx^{-1})^2)e^*_3\otimes e^*_3+x^2e^*_4\otimes e^*_4,\;(x\not=0,a,b,y\in \R).
			\end{eqnarray*}

			\item[$(c)$] 	$(L_{5,9},\prs_{5,9})$ with  
			\begin{eqnarray*}
				\prs_{5,9}
				&=&(a^2+b^2)e^*_1\otimes e^*_1+(b-ayx^{-1})e^*_1\odot e^*_2+ae^*_1\odot e^*_3+\e\sqrt{x^2+y^2+1}e^*_1\odot e^*_5\\
				&&(1+(yx^{-1})^2)e^*_2\otimes e^*_2-yx^{-1}e^*_2\odot e^*_3+e^*_3\otimes e^*_3+x^2 e^*_4\otimes e^*_4.\;(x\not=0,a,b,y\in \R,\e=\pm 1).
			\end{eqnarray*}	
			
			\item[$(d)$] $(L_{5,3},\prs_{5,3})$ with  
			\begin{eqnarray*}
				\prs_{5,3}
				&=&e^*_1\otimes e^*_1+ae^*_1\odot e^*_2+(a^2+b^2)e^*_2\otimes e^*_2+b e^*_2\odot e^*_3+\e\sqrt{x^2+1}  e^*_2\odot e^*_4\\&&+(1+x^2)e^*_3\otimes e^*_3- x e^*_3\odot e^*_5+e^*_5\otimes e^*_5,\;(x,a,b\in\R,\e=\pm 1).
			\end{eqnarray*}	
			
			\item[$(e)$] $(L_{5,5},\prs_{5,5,1})$ or $(L_{5,5},\prs_{5,5,2})$ with 
			\begin{eqnarray*}
				\prs_{5,5,1}&=&(a^2+b^2)e^*_1\otimes e^*_1+a\rho^{-1}e^*_1\odot e^*_2+\rho(b-ax^{-1}y) e^*_1\odot e^*_4+\sqrt{x^2+y^2}e^*_1\odot e^*_5\\
				&&+\rho^{-2}e^*_2\otimes e^*_2-x^{-1}ye^*_2\odot e^*_4+x^2\rho^{-2}e^*_3\otimes e^*_3+\rho^2(1+(x^{-1}y)^2)e^*_4\otimes e^*_4,\\&&\quad (x\not=0,\rho\not=0,a,b,y\in\R)
			\end{eqnarray*}	or
			\begin{eqnarray*}
				\prs_{5,5,2}&=&e^*_1\otimes e^*_1+be^*_1\odot e^*_2+(a^2+b^2)e^*_2\otimes e^*_2+ae^*_2\odot e^*_3+\e\sqrt{x^2+1} e^*_2\odot e^*_5\\
				&&(1+x^2)e^*_3\otimes e^*_3+x\rho e^*_3\odot e^*_4+\rho^{2}e^*_4\otimes e^*_4,\quad
				(\rho\not=0,x,a,b\in\R,\e=\pm 1).
			\end{eqnarray*}

			\item[$(f)$] $(L_{5,6},\prs_{5,6})$ with 
			\begin{eqnarray*}
				\prs_{5,6}&=&(a^2+b^2)e^*_1\otimes e^*_1+(b+ax^{-1}y)e^*_1\odot e^*_2+\mu ae^*_1\odot e^*_3+\e \mu^2\sqrt{x^2+y^2+1} e^*_1\odot e^*_5\\
				&&+(1+x^{-2}y^2)e^*_2\otimes e^*_2+\mu x^{-1}ye^*_2\odot e^*_3+\mu^2 e^*_3\otimes e^*_3+\mu^4x^2 e^*_4\otimes e^*_4,\\
				&&\mu\not=0,x\not=0,a,b,y\in\R,\e=\pm 1. 
			\end{eqnarray*}

		\end{enumerate}
		
\end{theo}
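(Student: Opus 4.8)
The plan is to follow the same strategy as in the proof of Theorem \ref{34}, now with $\dim V=3$. First I would invoke Theorem \ref{five} to guarantee that the center of $\G$ is degenerate, so that $\G$ contains a central isotropic vector and Theorem \ref{main} applies: $\G=\R e\oplus V\oplus\R\bar{e}$ is the double extension of a $3$-dimensional Euclidean space $(V,\prs_0)$ with admissible data $(K,D,0,b)$, where $e$ is central, $b\in V$, $K$ is skew-symmetric, $D$ is nilpotent, and by Proposition \ref{doubleext} one has $KD+D^*K=0$ together with the Ricci-flatness relation $\tr(K^2)=-2\tr(DD^*)$ (recall $\tr(D^2)=0$ since $D$ is nilpotent). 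The brackets are $[\bar{e},u]=Du+\langle b,u\rangle_0 e$ and $[u,v]=\langle Ku,v\rangle_0 e$ for $u,v\in V$.

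Next I would split according to the rank of the skew-symmetric endomorphism $K$ on the $3$-dimensional Euclidean space $V$, which is necessarily $0$ or $2$. If $\mathrm{rank}(K)=0$, then $\tr(K^2)=-2\tr(DD^*)$ forces $\tr(DD^*)=0$, hence $D=0$ since $DD^*$ is positive semidefinite; the only surviving bracket is $[\bar{e},u]=\langle b,u\rangle_0 e$, and choosing an orthonormal basis of $V$ adapted to $b$ identifies $(\G,\br)$ with $L_{5,2}$ and yields the metric $\prs_{5,2}$ of case $(a)$. This reduces everything to the case $\mathrm{rank}(K)=2$.

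For $\mathrm{rank}(K)=2$ I would apply Lemma \ref{KD} with $F=\ker K$, which is now a line: this normalizes $K$ to an invertible skew-symmetric block $K_0$ on the plane $F^\perp$ and writes $D$ in block form relative to $V=F\oplus F^\perp$, with $D(F)\subset F$, an off-diagonal component $D_2\colon F^\perp\too F$, and $D|_{F^\perp}$ of the form $K_0^{-1}S$ with $S$ symmetric. Imposing nilpotency of $D$ kills the scalar $D|_F$ and forces $K_0^{-1}S$ to be nilpotent on the plane (so $\mathrm{rank}(S)\leq 1$ and $\det S=0$), while the trace relation $\tr(K_0^2)=-2\tr(DD^*)$ pins down the remaining scalar. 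Organizing the resulting finite list of normal forms for $(K,D,b)$ by whether $S=0$ or $S\neq 0$ and by the position of $b$ and of $\mathrm{Im}\,D$ relative to $F$ should produce exactly the five families $L_{5,3},L_{5,5},L_{5,6},L_{5,8},L_{5,9}$.

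The main work, and the principal obstacle, is the final bookkeeping: for each normal form I must exhibit an explicit isometric change of basis bringing the bracket to the de Graaf normal form of the corresponding $L_{5,i}$ (see \cite{graaf}) and, simultaneously, transporting $\prs$ to read off the stated Gram matrices $\prs_{5,i}$; ruling out the remaining $5$-dimensional nilpotent algebras (the abelian one, $L_{5,4}$ and $L_{5,7}$) amounts to checking that no admissible $(K,D,b)$ reproduces their lower central series. The flatness assertion for $\prs_{5,2}$ then follows by computing the Levi-Civita products from \eqref{levicivita} on the explicit basis, which is routine but lengthy.
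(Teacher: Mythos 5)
Your proposal follows the paper's proof essentially verbatim: Theorems \ref{five} and \ref{main} reduce the problem to double-extension data $(K,D,0,b)$ over a $3$-dimensional Euclidean $V$, the case $K=0$ (forcing $D=0$ via $\tr(K^2)=-2\tr(DD^*)$) gives case $(a)$, and for $\mathrm{rank}(K)=2$ Lemma \ref{KD} together with nilpotency of $D$ and the trace constraint produce the same normal forms, which the paper then enumerates by the vanishing of exactly the parameters you identify (your condition $S\neq 0$ is the paper's $c\neq0$, the $F$-component of $b$ is its $\gamma$, and the $\mathrm{Im}\,D$ data are its $x,y$). The case-by-case change-of-basis bookkeeping you defer is indeed the bulk of the paper's proof, and your separate worry about excluding $L_{5,4}$ and $L_{5,7}$ resolves itself there, since the relation $\al=\sqrt{x^2+y^2+c^2}>0$ forces $D\neq0$ whenever $K\neq0$, so those algebras never arise in the enumeration.
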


\begin{proof} According to Theorems \ref{main} and \ref{five}, $\G=\R e\oplus V\oplus \R\bar{e}$, where $(V,\prs_0)$ is a 3-dimensional Euclidean vector space.  The Lie bracket is given by
	\[ [\bar{e},u]=Du+\langle b,u\rangle_0 e\esp [u,v]=\langle Ku,v\rangle_0 e, \quad u,v\in V,\] $e$ is central,   $b\in V,$ $K,D:V\too V$ with $K$ skew-symmetric, $D$ is nilpotent, $KD+D^*K=0$ and $\tr(K^2)=-2\tr(D^*D)$ moreover the metric $\prs$ satisfies $\prs_{|V}=\prs_0$, $e$ and $\bar{e}$ are isotropic in duality and are orthogonal to $V$.

	$\bullet$	If $K=D=0$ then there exists a  Lorentzian basis $(\bar{e},e,f_1,f_2,f_3)$ such that 
	\[ [\bar{e},f_1]=\al e,\; [\bar{e},f_2]=\be e\esp [\bar{e},f_3]=\ga e,\;\al\not=0. \]	Put
	$$(e_1,e_2,e_3,e_4,e_5)=(\e\bar{e},f_1,|\al| e,\mu_1^{-1}(f_2-\frac{\be}{\al}f_1),\mu_2^{-1}(f_3-\frac{\ga}{\al}f_1)),$$ 
	where $\e$ is the sign of $\al$,  $\mu_1=||f_2-\frac{\be}{\al}f_1||$ and $\mu_2=||f_3-\frac{\ga}{\al}f_1||$.	Thus $(\G,\br,\prs)$ is isomorphic to $(L_{5,2},\prs_{5,2})$ with  
	$$\prs_{5,2}=\al e^*_1\odot e^*_3+e^*_2\otimes e^*_2+e^*_4\otimes e^*_4+e^*_5\otimes e^*_5+a e^*_2\odot e^*_4+b e^*_2\odot e^*_5+ab e^*_4\odot e^*_5,$$
	where $\al\not=0$, $a=\frac{\be_1}{\sqrt{1+\be_1^2}}$, $b=\frac{\ga_1}{\sqrt{1+\ga_1^2}}$, $\be_1=\frac{\be}{\al}$ and $\ga_1=\frac{\ga}{\al}$.
	So $|a|<1$ and $|b|<1$.
	
	$\bullet$	If $K\not=0$ then according to Lemma \ref{KD},   there exists an orthonormal basis $\B_0=(f_1,f_2,f_3)$ of $V$ such that the matrices of $K$, $S$ and $D$ are given by
	\[ M(S,\B_0)=\mathrm{Diag}(0,a),\; M(K,\B_0)=\left(\begin{matrix}0&0&0\\
	0&0&-\al\\0&\al&0
	\end{matrix}  \right)\esp M(D,\B_0)=\left(\begin{matrix}0&x&y\\
	0&0&a\al^{-1}\\0&0&0
	\end{matrix}  \right),\;\al>0. \]Put $c=a\al^{-1}$. The condition $\tr(K^2)=-2\tr(D^*D)$ gives $\al=\sqrt{x^2+y^2+c^2}$. 
	Thus the Lie bracket is given by
	\[ [\bar{e},f_1]=\ga e,\;[\bar{e},f_2]=x f_1+\mu e,\;[\bar{e},f_3]=yf_1+cf_2+\be e\esp [f_2,f_3]=\al e. \]
	Put $a=-\be\al^{-1}$, $b=\mu\al^{-1}$, $z=\al e$ and $\bar{z}=\bar{e}+af_2+bf_3$.
	We get	\[ [\bar{z},f_1]=\ga\al^{-1} z,\;[\bar{z},f_2]=x f_1,\;[\bar{z},f_3]=yf_1+cf_2\esp [f_2,f_3]=z. \]
	
	$\bullet$ \underline{$\ga=0$,  $x\not=0$ and $c=0$}. Then
	\[ \;[\bar{z},f_2]=x f_1\esp [f_2,f_3-yx^{-1}f_2]=z.\]
	Put $(e_1,e_2,e_3,e_4,e_5)=(f_2,\bar{e}+af_2+bf_3,f_3-yx^{-1}f_2,-xf_1,\al e)$
	Thus $(\G,\br,\prs)$ is isomorphic to  $(L_{5,8},\prs_{5,8})$.

	$\bullet$ \underline{$\ga=0$,  $x\not=0$ and $c\not=0$}.
	\[ \;[\bar{z},f_2]=x f_1,\;[\bar{z},f_3-yx^{-1}f_2]=cf_2\esp [f_2,f_3-yx^{-1}f_2]=z.\]
	Put $$(e_1,e_2,e_3,e_4,e_5)=(c^{-1}(\bar{e}+af_2+bf_3),f_3-yx^{-1}f_2,f_2,c^{-1}xf_1,-\al e)).$$ After the change of parameters $c^{-1}(a,b,x,y)$ to $(a,b,x,y)$, we get that
	$(\G,\br,\prs)$ is isomorphic to $(L_{5,9},\prs_{5,9})$.

	\underline{	$\bullet$ $\ga=0$,  $x=0$, $c=0$}. Put
	\[ (e_1,e_2,e_3,e_4,e_5)=(f_3,\bar{e}+af_2+bf_3,-f_2,-yf_1,\al e). \]
	Thus $(\G,\br,\prs)$ is isomorphic to $(L_{5,8},\prs_{5,8})$ with $y=0$.

	\underline{	$\bullet$ $\ga=0$,  $x=0$, $c\not=0$}. Put 
	\[ (e_1,e_2,e_3,e_4,e_5)=(f_3,c^{-1}(\bar{e}+af_2+bf_3),-f_2-c^{-1} yf_1,\al e,f_1). \]After the change of parameters $c^{-1}(a,b,y)$ to $(a,b,y)$ we get that
	$(\G,\br,\prs)$ is isomorphic to 
	$(L_{5,3},\prs_{5,3})$.

	$\bullet$ \underline{ $\ga\not=0$}. Put    $g_1=\al\ga^{-1}f_1$ then
	\[ [\bar{z},g_1]= z,\;[\bar{z},f_2]=x\al^{-1}\ga g_1,\;[\bar{z},f_3]=y\al^{-1}\ga g_1+cf_2\esp [f_2,f_3]=z. \]

	\underline{$\ga\not=0$ and $c=0$}. Then $(x,y)\not=(0,0)$ and we can suppose that $x\not=0$. Then
	\[ [\bar{z},g_1]= z,\;[\bar{z},f_2]=x\al^{-1}\ga g_1,\;[\bar{z},f_3- x^{-1}y f_2]=0\esp [f_2,f_3- x^{-1}y f_2]=z. \]
	Put
	\[ (e_1,e_2,e_3,e_4,e_5)=(\bar{e}+af_2+bf_3,x^{-1}\al\ga^{-1}f_2,\al\ga^{-1}f_1,x\al^{-1}\ga(f_3- x^{-1}y f_2),\al e)\esp \rho=x\al^{-1}\ga. \]
	Then $(\G,\br,\prs)$ is isomorphic to$(L_{5,5},\prs_{5,5,1})$. 
	
	\underline{$\ga\not=0$, $c\not=0$ and $x=0$.} Then
	\[ [\bar{z},g_1]= z,\;[\bar{z},f_3]=y\al^{-1}\ga g_1+cf_2\esp [f_2,f_3]=z. \]
	Put 
	\[ (e_1,e_2,e_3,e_4,e_5)=(-f_3,c^{-1}(\bar{e}+af_2+bf_3),f_2+c^{-1}y\al^{-1}\ga g_1,cg_1,\al e). \]
	After the change of parameters $c^{-1}(a,b,y)$ to $(a,b,x)$ and $\rho=c\al\ga^{-1}$ we get that $(\G,\br,\prs)$ is isomorphic to $(L_{5,5},\prs_{5,5,2})$.

	\underline{$\ga\not=0$, $c\not=0$ and $x\not=0$.} Then
	\[	[c^{-1}\bar{z},cg_1]= z,\;[c^{-1}\bar{z},f_2]=c^{-1}x\al^{-1}\ga g_1,\;[c^{-1}\bar{z},f_3-x^{-1}yf_2]=f_2\esp [f_2,f_3-x^{-1}yf_2]=z. \]
	Put
	\[ (e_1,e_2,e_3,e_4,e_5)=(-c^{-1}(\bar{e}+af_2+bf_3),f_3-x^{-1}yf_2,-f_2,-cg_1,\al e). \]
	Then
	\[ [e_1,e_2]=e_3,[e_1,e_3]=ke_4,[e_1,e_4]=e_5,[e_2,e_3]=e_5. \]			
	We can always suppose that $k>0$ (otherwise replace $e_3$ by $-e_3$ and $e_2$ by $-e_2$). We put $e_1'=\mu e_1$ and $e_3'=\mu e_3$, $e_5'=\mu e_5$ and  $\mu^2=\frac{1}{{k}}$. After an adequate change of parameters one can see that $(\G,\br,\prs)$ is isomorphic to $(L_{5,6},\prs_{5,6})$. 
\end{proof}

$\ $
\begin{exem}\label{exem}$\ $ \begin{enumerate}

		\item { Example of a six dimensional Ricci flat Lorentzian nilpotent Lie algebra with nondegenerate center.}
	\[ [e_1,e_3]=e_6,\;[e_1,e_5]=e_6,\;[e_2,e_3]=-e_6,[e_2,e_4]=e_6,[e_3,e_4]=e_1,[e_3,e_5]=e_2\esp [e_4,e_5]=e_1+e_2. \]
	$\B=(e_1,\ldots,e_6)$ is an orthonormal basis with $\langle e_1,e_1\rangle=-1$.	
	\item { Example of a seven dimensional Ricci flat Lorentzian nilpotent Lie algebra with nondegenerate center.}
	\[ [e_1,e_3]=\sqrt{2}e_7,\;[e_2,e_4]=\sqrt{2}e_7
	,\;[e_4,e_5]=-e_1,[e_4,e_6]=-e_1,[e_3,e_5]=-e_2,[e_3,e_6]=-e_2. \]
	$\B=(e_1,\ldots,e_7)$ is an orthonormal basis with $\langle e_1,e_1\rangle=-1$.
	
	\item { Example of an eight dimensional Einstein Lorentzian nilpotent Lie algebra with non vanishing scalar curvature.} This example was given in \cite{Dconti1}.
	\[ \begin{cases}
	[e_1,e_2]=-4\sqrt{3}e_3,\;[e_1,e_3]=\sqrt{\frac52}e_4,\;
	[e_1,e_4]=-2\sqrt{3}e_8,\;[e_1,e_5]=3\sqrt{\frac72}e_6,;\\
	[e_1,e_6]=-4\sqrt{2}e_7,\;
	[e_2,e_3]=-\sqrt{\frac52}e_5,\;[e_2,e_4]=-3\sqrt{\frac72}e_6,\;
	[e_2,e_5]=-2\sqrt{3}e_7,\;
	\\\;[e_2,e_6]=-4\sqrt{2}e_8,\;
	[e_3,e_4]=-\sqrt{21}e_7,\;[e_3,e_5]=-\sqrt{21}e_8.
	\end{cases} \]
	$\B=(e_1,\ldots,e_8)$ is an orthonormal basis with $\langle e_6,e_6\rangle=-1$.

	\end{enumerate}
	
\end{exem}
\begin{table}[!htbp]
	\small
	\begin{center}
		\begin{tabular}{|l|l|l|}
			\hline
			Lie Algebra        &  Lie brackets                                                       & Non Trace-free Derivation                                                                                                \\ \hline
			$\mathrm{L}_{3,2}$ & $[e_1,e_2]=e_3$                                                    & $e^1\otimes e_1+e^3\otimes e_3$                                                                                          \\\hline
			$\mathrm{L}_{4,2}$ & $[e_1,e_2]=e_3$                                                    & $e^1\otimes e_1+e^3\otimes e_3$                                                                                          \\\hline
			$\mathrm{L}_{4,3}$ & $[e_1,e_2]=e_3$, $[e_1,e_3]=e_4$                                   & $2e^2\otimes e_2-e^1\otimes e_1+e^3\otimes e_3$                                                                          \\\hline
			$\mathrm{L}_{5,2}$ & $[e_1,e_2]=e_3$                                                    & $e^1\otimes e_1+e^3\otimes e_3$                                                                                          \\\hline
			$\mathrm{L}_{5,3}$ & $[e_1,e_2]=e_3$, $[e_1,e_3]=e_4$                                   & $2e^2\otimes e_2-e^1\otimes e_1+e^3\otimes e_3$                                                                          \\\hline
			$\mathrm{L}_{5,4}$ & $[e_1,e_2]=e_5$, $[e_3,e_4]=e_5$                                   & $e^1\otimes e_1+e^3\otimes e_3+e^5\otimes e_5$                                                                           \\\hline
			$\mathrm{L}_{5,5}$ & $[e_1,e_2]=e_3$, $[e_1,e_3]=e_5$, $[e_2,e_4]=e_5$                  & $e^3\otimes e_3+2e^2\otimes e_2+2e^5\otimes e_5-e^1\otimes e_1$                                                          \\\hline
			$\mathrm{L}_{5,6}$ & $[e_1,e_2]=e_3$, $[e_1,e_3]=e_4$, $[e_1,e_4]=e_5$, $[e_2,e_3]=e_5$ & $e^1\otimes e_1+2e^2\otimes e_2+3e^3\otimes e_3+4e^4\otimes e_4+5e^5\otimes e_5$ \\\hline
			$\mathrm{L}_{5,7}$ & $[e_1,e_2]=e_3$, $[e_1,e_3]=e_4$, $[e_1,e_4]=e_5$                  & $e^1\otimes e_1-2e^2\otimes e_2-e^3\otimes e_3+e^5\otimes e_5$                                                        \\\hline
			$\mathrm{L}_{5,8}$ & $[e_1,e_2]=e_4$, $[e_1,e_3]=e_5$                                   & $e^1\otimes e_1-e^2\otimes e_2+e^5\otimes e_5$                                                                           \\\hline
			$\mathrm{L}_{5,9}$ & $[e_1,e_2]=e_3$, $[e_1,e_3]=e_4$, $[e_2,e_3]=e_5$                  & $2e^1\otimes e_1-e^2\otimes e_2+e^3\otimes e_3+3e^4\otimes e_4$                                                          \\ \hline
		\end{tabular}
	\end{center}
	\caption{Table of nilpotent Lie algebras of dimension $\leq 5$ with non null trace  derivation }
	\label{tab:my-table}
\end{table}


\begin{thebibliography}{99}
	
	\bibitem{bou1} M. Ait Haddou, M. Boucetta, H. Lebzioui,  Left-invariant Lorentzian flat metrics on Lie groups,
	Journal of Lie Theory {\bf 22} (2012), No. 1, 269-289.
	
	
	
	
	\bibitem{Aub-Med}  \textsc{ Aubert Anne \& Medina Alberto,} Groupes de Lie
	pseudo-riemanniens plats.  Tohoku Math. J. {\bf (2) 55} (2003), no. 4, 487-506.
	
	\bibitem{besse} Arthur L. Besse, Einstein manifolds, Classic in Mathematics Springer (2008).

	\bibitem{bouc1}
	 Boucetta, M., Ricci flat left invariant Lorentzian metrics on
		2-step nilpotent Lie groups, arXiv:0910.2563v2[math.DG] 15 Feb 2010.
	
\bibitem{bouh} Mohamed Boucetta and Hicham Lebzioui, On flat pseudo-Euclidean nilpotent Lie algebras,	Journal of Algebra {\bf537} (2019) 459-477
	
	\bibitem{calvaruso3} Giovanni Calvaruso and Amirhesam Zaeim, Four-dimensional homogeneous Lorentzian manifolds, Monatsh Math  {\bf 174} (2014) 377-402.
	
	\bibitem{Dconti1} Conti, D., Rossi, F. A. (2019). Einstein nilpotent Lie groups. Journal of Pure and Applied Algebra, {\bf 223(3)}, 976-997.
	
	\bibitem{graaf} W.A. De Graaf, Classification of 6-dimensional nilpotent Lie algebras over fields of characteristic not 2, J. Algebra {\bf 309}
	(2007) 640-653.
	
	
	\bibitem{derd} A. Derdzinski, S.R. Gal, Indefinite Einstein metrics on simple Lie groups, Indiana Univ. Math. J. {\bf63 (1)} (2014) 165-212.

	\bibitem{guediri} Mohammed Guediri \& Mona Bin-Asfour,
Ricci-flat left-invariant Lorentzian metrics on 2-step nilpotent Lie groups,
Archivum Mathematicum, {\bf Vol. 50} (2014), No. 3, 171-192.

	
	\bibitem{heber} Heber J., Noncompact Einstein spaces, Invent. Math. {\bf 133} (1998) 279-352.
	
		
	\bibitem{lauret} J. Lauret, A canonical compatible metric for geometric structures on nilmanifolds, Ann. Global Anal. Geom. {\bf 30} (2006) 107-138.
	
	\bibitem{lauret1} J. Lauret, Einstein solvmanifolds are standard, Annals of Mathematics, {\bf 172} (2010), 1859-1877.
	
	
	
	\bibitem{medina} Alberto Medina \& Philippe Revoy, Alg\`ebre de Lie et produit scalaire invariant,
	Annales scientifiques de l'\'Ecole Normale Sup\'erieure (1985)
	{\bf Vol. 18}, Issue: 3, page 553-561.
	
	
	\bibitem{milnor} \textsc{ Milnor J.},  Curvature of left invariant metrics on Lie
	groups, Adv. in Math. {\bf 21} (1976), 283-329.
	
	 
	
	
	
	
	
\end{thebibliography}
\end{document}